\documentclass[a4paper, 11pt]{article}
\usepackage{amsmath}
\usepackage{amsfonts,amsthm,amssymb}
\usepackage{amsfonts}
\usepackage{graphics}
\usepackage{pstricks}
\usepackage{graphicx}
\usepackage{cleveref}

\setlength{\textwidth}{160mm} \setlength{\textheight}{235mm}
\setlength{\headheight}{2cm} \setlength{\topmargin}{0pt}
\setlength{\headsep}{0pt} \setlength{\oddsidemargin}{0pt}
\setlength{\evensidemargin}{0pt}
\usepackage{extarrows,chngpage,array,float}     
\setcounter{MaxMatrixCols}{15}
\usepackage{blkarray} 
\parskip=7pt

\voffset -20mm \rm
\usepackage{amssymb}
\usepackage{latexsym,bm}
\usepackage{cite}

\newtheorem{theorem}{Theorem}[section]
\newtheorem{lemma}[theorem]{Lemma}

\newtheorem{corollary}[theorem]{Corollary}

\newtheorem{question}[theorem]{Question}

\allowdisplaybreaks   

\usepackage{multicol}
\usepackage{makecell}
\usepackage{authblk}

\begin{document}
\title{\textbf{A sharp upper bound for the number of  connected sets in any grid graph}}
\author[a,b]{Hongxia Ma}
\author[a]{Xian'an Jin}
\author[a]{Weiling Yang}
\author[a]{Meiqiao Zhang\thanks{Corresponding Author.\\\indent \hspace{0.18cm} E-mails:  hongxiama516@163.com, xajin@xmu.edu.cn, ywlxmu@xmu.edu.cn, meiqiaozhang95@163.com.}}
	\affil[a]{\small School of Mathematical Sciences, Xiamen University, P. R. China}
	\affil[b]{\small School of Mathematical Sciences, Xinjiang Normal University, P. R. China}
\date{}
\maketitle
\begin{abstract}
A connected set in a graph is a subset of vertices whose  induced subgraph is connected. Although counting the number of connected sets  in a graph is generally a \#P-complete problem, it remains an active area of research. In 2020,   Vince  posed the problem of finding a formula for the number of connected sets in  the $(n\times n)$-grid graph. In this paper, we establish a sharp upper bound for the number of connected sets in any grid graph by using multistep recurrence formulas, which further derives enumeration formulas for the numbers of connected sets in $(3\times n)$- and $(4\times n)$-grid graphs, thus solving a special case of the general problem posed by Vince.
In the process, we also determine the number of connected sets of $K_{m}\times P_{n}$ by employing the transfer matrix method,  where $K_{m}\times P_{n}$ is the Cartesian product of the complete graph of order $m$ and the path of order $n$.
\\
\skip0.2cm
\noindent \textbf{Keywords:} connected sets;  Cartesian product; grid graph; enumeration; recurrence formula.\\
\end{abstract}

\section{Introduction}
\noindent

Throughout this paper, a graph $G$ always means a finite simple graph with the vertex set $V(G)$ and the edge set $E(G)$. The number of connected components of $G$ is denoted by  $c(G)$. 
For any positive integer $n$, let $K_{n}$, $C_{n}$ and $P_{n}$ be the complete
graph, cycle and path of order $n$, respectively.
 The \textit{Cartesian product} of any pair of graphs $G$ and $H$ is the graph $G\times H$ whose vertex set is $V(G)\times V(H)$ and whose edge set is the set of all
pairs $(u_{1},v_{1})(u_{2},v_{2})$ such that either $u_{1}u_{2} \in E(G)$ and $v_{1}=v_{2}$, or $v_{1}v_{2} \in E(H)$ and $u_{1}=u_{2}$.
In particular, the graph $P_{m}\times P_{n}$ is known as the $(m\times n)$-grid graph.\par
For any graph $G$, a \textit{connected set} of $G$ is a subset $C\subseteq V(G)$ such that the subgraph of $G$ induced by $C$, denoted by $G[C]$, is connected. By convention, the empty set is not a connected set.  Let $\mathcal C(G)$ be the set of  all the connected sets of $G$, and let $N(G)=|\mathcal C(G) |$.
 \par

In recent years, extensive research has been devoted to the enumeration of connected sets in arbitrary graphs.
Luo and Xu \cite{Luo} established sharp upper and lower bounds for $N(G)$ of any graph $G$ based on the order of $G$ and certain other parameters, including the chromatic number, stability number, and so on. Moreover, they derived in \cite{Xu} linear-time algorithms for the number and average size of connected sets in a planar 3-tree. 
For results on special classes of graphs,
more research has been conducted in \cite{Vince2022,Ralaivaosaona,KANG2018}. In particular, the problem of counting subtrees in trees has been explored in \cite{KRIK,LI2012,JAMISON1987177,JAMISON1990177,Sills,Stanley,Yan,Haslegrave}, with generalizations to arbitrary graphs discussed in \cite{SUN,CHIN},
while in 2020, Vince  obtained explicit formulas for $N(P_{2}\times P_{n})$ and $N(P_{2}\times C_{n})$ and posed the following problem in  \cite{Wagner2016}.\par
\begin{question}[\cite{Wagner2016}, Question 2]\label{ques}
Find a formula for $N(P_{n}\times P_{n})$, that is, the number of connected sets of the
$(n\times n)$-grid graph.
\end{question}

In this paper, we offer a partial solution to  Question \ref{ques} by developing a general method based on recurrence formulas. We will establish an upper bound for the number of connected sets in any $(m \times n)$-grid graph in Theorem~\ref{main}, which is sharp  when $m=3,4$. Then as further applications, we present enumeration formulas for the numbers of connected sets of $(3\times n)$- and $(4\times n)$-grid graphs in Theorems \ref{main3} and \ref{main4}, respectively.

The outline of this article is as follows.
We first provide an enumeration formula for $N(K_{m}\times P_{n})$ in Theorem~\ref{main1}, with the proof given in Section 2 by using the transfer matrix method.
\begin{theorem}\label{main1}
For any positive integers $m$ and $n$, the total number of connected sets in $K_{m}\times P_{n}$  is
$$N(K_{m}\times P_{n})=\sum\limits_{k=1}^{n}(n-k+1)\cdot(\mathbf{u}\cdot T^{k-1}\mathbf{1} ),$$
where
\text{\( \mathbf{u} = \begin{bmatrix}\binom{m} {1} & \binom{m} {2}  & \cdots & \binom{m} {m}\end{bmatrix} \)}, the transfer matrix $T=(t_{ij})_{i,j\in \{1,2,...,m\}}$, and
\[
t_{ij} =
\begin{cases}
\binom{m} {j}-\binom{m-i} {j}, & \text{when } j\leq m-i, \\
 \binom{m} {j}, & \text{when } j\geq  m-i+1.
\end{cases}
\]
\end{theorem}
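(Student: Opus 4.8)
The plan is to exploit the layered structure of $K_m\times P_n$. Writing the vertices of $P_n$ as $1,\dots,n$, the product consists of $n$ disjoint copies of $K_m$ (one \emph{layer} per path-vertex), together with a perfect matching joining each vertex of layer $t$ to the corresponding vertex of layer $t+1$. Since every layer induces a complete graph and the only edges between layers are these matching edges, two observations are immediate. First, any connected set must occupy a \emph{contiguous} block of layers: a set meeting layers $i$ and $j$ with $i<j$ but missing some intermediate layer cannot be connected, because all inter-layer edges run between consecutive layers only. Second, a connected set occupying the $k$ consecutive layers $a,a+1,\dots,a+k-1$ is determined by a sequence $(S_1,\dots,S_k)$ of nonempty subsets of $V(K_m)=\{1,\dots,m\}$, one per layer.

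Next I would pin down exactly when such a sequence induces a connected subgraph. Because each layer induces a complete graph on $S_t$, the whole induced subgraph is connected if and only if consecutive layers are linked, i.e. $S_t\cap S_{t+1}\neq\emptyset$ for every $t=1,\dots,k-1$: a single shared $K_m$-vertex yields a matching edge between the two blobs, while $S_t\cap S_{t+1}=\emptyset$ severs the first $t$ layers from the rest. Denoting by $A_k$ the number of such admissible sequences of length $k$, and noting that a block of $k$ consecutive layers can be placed in exactly $n-k+1$ ways inside $P_n$, the count splits as $N(K_m\times P_n)=\sum_{k=1}^{n}(n-k+1)A_k$. It then remains to show $A_k=\mathbf{u}\cdot T^{k-1}\mathbf 1$.

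The transfer-matrix step records only the \emph{size} of the current subset, which suffices by the symmetry of $K_m$. I would introduce the row vector $p_k$ whose $j$-th entry counts admissible sequences $(S_1,\dots,S_k)$ with $|S_k|=j$; then $p_1=\mathbf u$, since there are $\binom{m}{j}$ subsets of each size $j$. The crucial point is that, for a fixed set $S$ of size $i$, the number of size-$j$ sets meeting $S$ depends only on $i$: by complementary counting it equals $\binom{m}{j}$ minus the number $\binom{m-i}{j}$ of size-$j$ subsets of the $m-i$ elements outside $S$, which is exactly the entry $t_{ij}$ (here $\binom{m-i}{j}=0$ precisely when $j\ge m-i+1$, matching the case split in the statement). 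Hence $p_{k+1}=p_kT$, so $p_k=\mathbf u\,T^{k-1}$, and summing the entries gives $A_k=p_k\cdot\mathbf 1=\mathbf u\,T^{k-1}\mathbf 1$. Substituting into the displayed split completes the proof.

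The main obstacle is conceptual rather than computational: one must carefully justify that connected sets decompose as contiguous blocks of layers and that within-block connectivity reduces to the nonempty-consecutive-intersection condition. Once this reduction is secured, the technical heart is collapsing the exponentially many subset-states down to the $m$ size-states, which is legitimate precisely because $t_{ij}$ depends only on the sizes $i$ and $j$; the closed form for $t_{ij}$ is then a one-line complementary count.
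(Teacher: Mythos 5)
Your proof is correct and follows essentially the same route as the paper: decompose connected sets into contiguous blocks of columns (giving the factor $n-k+1$) and run a transfer matrix indexed by the sizes of the column-intersections, with $t_{ij}=\binom{m}{j}-\binom{m-i}{j}$ obtained by the same complementary count. The only cosmetic difference is that you iterate forward from the first column (so $\mathbf{u}$ is the initial vector and $\mathbf{1}$ sums at the end), whereas the paper recurses backward from a fixed set in the last column via $f_{m,k}^{i}$; by associativity both yield $\mathbf{u}\,T^{k-1}\mathbf{1}$.
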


Next, in Section 3, we develop Lemmas \ref{lem6} and \ref{lem1} that provide a method based on multistep recurrence formulas to determine a lower bound of $|\mathcal C(K_{m}\times P_{n} )\setminus \mathcal C(P_{m}\times P_{n})|$, which is sharp when $m=3,4$. Then using the equation that
$$N(P_{m}\times P_{n})=N(K_{m}\times P_{n})-|\mathcal C(K_{m}\times P_{n} )\setminus \mathcal C(P_{m}\times P_{n})|,$$
we are able to establish an upper bound of $N(P_{m}\times P_{n})$ in Theorem~\ref{main} that
is tight when $m=3,4$.
Subsequently, the detailed derivation of enumeration formulas for the numbers of connected sets of  $(3\times n)$- and $(4\times n)$-grid graphs will be  given in Section 4, which shows promise of calculating for the case $m\ge 5$, but also reveals the extreme complexity of this type of enumeration problems.

\section{Calculating $N(K_{m}\times P_{n})$}
In this section, we prove Theorem~\ref{main1} and  list clearly the  case for $m=3,4$ as examples.

We first give vertex labels of $K_{m}\times P_{n}$ for any positive integers $m$ and $n$, which will be used throughout the remaining of this paper.
Assume that $V(K_{m}\times P_{n})=\{ v_{i,j}: i\in \{1,2,...,m\},  j \in \{1,2,...,n\}\}$, where for any $i_1,i_2\in \{1,2,...,m\}$ and $j_1,j_2\in \{1,2,...,n\}$, $v_{i_1,j_1}v_{i_2,j_1}\in E(K_{m}\times P_{n})$, and $v_{i_1,j_1}v_{i_1,j_2}\in E(K_{m}\times P_{n})$ if and only if $|j_1-j_2|=1$. For each $j=1,2,\dots, n$, we call the set $I_j=\{v_{i,j}:i \in \{1,2,...,m\}\}$  the \textit{$j$-th column} of $K_{m}\times P_{n}$, the induced graph of which is clearly isomorphic to $K_m$. For any $S_{j}\subseteq I_{j}$ with $j\ge 2$,
let $S_{j-1}$ be the set of vertices in $I_{j-1}$ that are adjacent to the vertices in $S_{j}$. Then $|S_{j}|=|S_{j-1}|$ clearly holds.

Now we give the proof of Theorem~\ref{main1}.

\noindent\textit{Proof of Theorem~\ref{main1}.}
For any  integer $1\le k\le n$, let $\mathcal C_{m,k}$ be the set of connected sets of $K_{m}\times P_{k}$ that contain at least one vertex in each column, and let $N_{m,k}=|\mathcal C_{m,k}|$. Then
\begin{eqnarray}\label{equ2.1}
N(K_{m}\times P_{n})=\sum\limits_{k=1}^{n}(n-k+1) \cdot N_{m,k}.
\end{eqnarray}
Thus it suffices to consider the values of $N_{m,k}$ for all integers $1\le k\le n$ in the following.

For any $i=1,2,\dots,m$, let $f_{m,k}^{i}$ denote the number of  connected sets  in $K_{m}\times P_{k}$ that contain at least one vertex in each column and in particular, a  fixed
set of  $i$ vertices in the $k$-th column. Then $f_{m,1}^{i}=1$ for all $i=1,2,\dots,m$ and
\begin{eqnarray}\label{equ2.2}
N_{m,k}=\sum \limits_{i=1}^{m}\binom{m} {i}f_{m,k}^{i}.
\end{eqnarray}

Recall that $I_{k}$ is the set of vertices in the $k$-th column of  $K_{m}\times P_{k}$. Then
for  any $C\in \mathcal C_{m,k}$ with $C\cap I_{k}=S_{k}$, it is clear to see that $C\cap S_{k-1}\neq \emptyset$ and $C\setminus S_{k} \in \mathcal C_{m,k-1}$.
Thus we have a recurrence formula that for any $k\geq 2$ and $i\in\{1,2,\dots,k\}$,
\begin{eqnarray}\label{equ2.3}
f_{m,k}^{i}&= &N_{m,k-1}-\sum \limits_{j=1}^{m-i}\binom{m-i} {j}f_{m,k-1}^{j}\nonumber\\
&=&\sum \limits_{j=1}^{m}\binom{m} {j}f_{m,k-1}^{j}-\sum \limits_{j=1}^{m-i}\binom{m-i} {j}f_{m,k-1}^{j}\nonumber\\
&=&\sum \limits_{j=1}^{m-i}\left(\binom{m} {j}-\binom{m-i} {j}\right)f_{m,k-1}^{j}+\sum \limits_{j=m-i+1}^{m}\binom{m} {j}f_{m,k-1}^{j}.
\end{eqnarray}

  For any $k\ge 2$, let \text{\( \mathbf{v}_{k} = \begin{bmatrix}f_{m,k}^{1} &f_{m,k}^{2} & \cdots & f_{m,k}^{m}\end{bmatrix} \)}.
Then by (\ref{equ2.3}), we have
  \begin{eqnarray}\label{equ2.4}
\mathbf{v_{k}^{\top}}=T \mathbf{v_{k-1}^{\top}}=T^{k-1}\mathbf{v_{1}^{\top}}=T^{k-1} \mathbf{1},
  \end{eqnarray}
 where $\mathbf{v_{1}^{\top}}=\mathbf{1}$ is the vector whose entries are all 1's.

Further, let \text{\( \mathbf{u} = \begin{bmatrix}\binom{m} {1} & \binom{m} {2}  & \cdots & \binom{m} {m}\end{bmatrix} \)}.
Then by (\ref{equ2.2}) and (\ref{equ2.4}), we have
\begin{eqnarray}\label{equ2.5}
N_{m,k}=\mathbf{u}\cdot \mathbf{v}_{k}^{\top}=\mathbf{u}\cdot T^{k-1}\mathbf{1}.
\end{eqnarray}

The result follows from (\ref{equ2.1}) and (\ref{equ2.5}).
\hfill$\Box$

By Theorem \ref{main1}, our enumeration formulas for $N(K_{3}\times P_{n})$ and $N(K_{4}\times P_{n})$ are as follows.
\begin{corollary}\label{coro1}
The total number of connected sets of $K_{3}\times P_{n}$ is
$$N(K_{3}\times P_{n})=\sum\limits_{k=1}^{n} (n-k+1)\cdot N_{3,k},$$
where \[
N_{3,k}= \mathbf{u_{1}}\cdot T_{1}^{k-1}\mathbf{1}, \quad    \mathbf{u_{1}} = \begin{bmatrix}\binom{3} {1} & \binom{3} {2}  & \binom{3} {3}\end{bmatrix},   \quad    \text{and} \quad
\begin{array}{c}
T_{1}=\left[ \begin{matrix}
1 &2  &1 \\
2 & 3 & 1 \\
3 & 3 & 1
\end{matrix} \right]
\end{array}.
\]
\end{corollary}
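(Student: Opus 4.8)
The plan is to derive this corollary as the direct specialization of Theorem~\ref{main1} to $m=3$, so that no new argument is required beyond a careful substitution. First I would apply Theorem~\ref{main1} with $m=3$, which immediately gives the overall shape $N(K_3\times P_n)=\sum_{k=1}^{n}(n-k+1)\cdot N_{3,k}$ with $N_{3,k}=\mathbf{u}\cdot T^{k-1}\mathbf{1}$. It then remains only to check that the vector $\mathbf{u}$ and the transfer matrix $T$ of Theorem~\ref{main1}, when evaluated at $m=3$, coincide with the explicit $\mathbf{u_{1}}$ and $T_{1}$ displayed in the statement.

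The vector is settled at once by computing $\binom{3}{1}=3$, $\binom{3}{2}=3$, and $\binom{3}{3}=1$, so that $\mathbf{u}=\mathbf{u_{1}}$. For the matrix, I would evaluate each of the nine entries $t_{ij}$ with $i,j\in\{1,2,3\}$ from the two-case definition, the only point of care being to decide for each pair $(i,j)$ whether $j\le m-i=3-i$, which invokes the subtractive case $\binom{3}{j}-\binom{3-i}{j}$, or $j\ge 4-i$, which invokes the plain case $\binom{3}{j}$. Running through the rows---with thresholds $m-i$ equal to $2$, $1$, and $0$ for $i=1,2,3$ respectively---reproduces exactly
$$T=\begin{bmatrix}1 & 2 & 1\\ 2 & 3 & 1\\ 3 & 3 & 1\end{bmatrix}=T_{1}.$$

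With these identifications in hand, substituting $N_{3,k}=\mathbf{u_{1}}\cdot T_{1}^{k-1}\mathbf{1}$ into equation~(\ref{equ2.1}) specialized to $m=3$ yields the claimed formula. I anticipate no genuine obstacle: the one place that demands attention is the boundary of the case split, where $j=m-i$ falls under the subtractive case while $j=m-i+1$ does not, so I would classify each entry explicitly rather than by inspection to avoid an off-by-one slip at that boundary.
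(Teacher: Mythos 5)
Your proposal is correct and is exactly how the paper obtains this corollary: the paper gives no separate proof, simply stating it follows from Theorem~\ref{main1} by setting $m=3$, and your entrywise evaluation of $t_{ij}$ (with thresholds $m-i=2,1,0$ for rows $i=1,2,3$) reproduces $T_1$ and $\mathbf{u_1}$ correctly.
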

\par
\noindent\textbf{Remark:}
With a little effort, we can get $N_{3,1}=N(K_{3})=7$,  $N_{3,2}= 37$, $ N_{3,3}= 205$, and $ N_{3,4}= 1129$. \par

\begin{corollary}\label{coro2}
The total number of connected sets of $K_{4}\times P_{n}$ is
$$N(K_{4}\times P_{n})=\sum\limits_{k=1}^{n}(n-k+1)\cdot N_{4,k},$$
where
\[
N_{4,k}= \mathbf{u_{2}}\cdot T_{2}^{k-1}\mathbf{1}, \quad    \mathbf{u_{2}} =\begin{bmatrix}\binom{4} {1} & \binom{4} {2}  & \binom{4} {3} & \binom{4} {4}\end{bmatrix},   \quad    \text{and} \quad
\begin{array}{c}
T_{2}=\left[ \begin{matrix}
1 &3  &3 &1 \\
2 & 5  &4& 1 \\
3 & 6  &4& 1 \\
4 & 6  &4& 1
\end{matrix} \right]
\end{array}.
\]

\end{corollary}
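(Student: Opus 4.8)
The plan is to derive Corollary \ref{coro2} as a direct specialization of Theorem \ref{main1} to the case $m=4$. Since Theorem \ref{main1} already supplies the general identity $N(K_m\times P_n)=\sum_{k=1}^{n}(n-k+1)\cdot(\mathbf{u}\cdot T^{k-1}\mathbf{1})$ with $N_{m,k}=\mathbf{u}\cdot T^{k-1}\mathbf{1}$, the only remaining task is to instantiate the row vector $\mathbf{u}$ and the $m\times m$ transfer matrix $T$ at $m=4$ and to confirm that these coincide with the $\mathbf{u_2}$ and $T_2$ displayed in the statement. No new combinatorial argument is needed; the corollary is a transcription of the general theorem with $m=4$ substituted.

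First I would record $\mathbf{u}=\mathbf{u_2}=\begin{bmatrix}\binom{4}{1} & \binom{4}{2} & \binom{4}{3} & \binom{4}{4}\end{bmatrix}=\begin{bmatrix}4 & 6 & 4 & 1\end{bmatrix}$, which is immediate from the definition of $\mathbf{u}$ in Theorem \ref{main1}. The substance of the proof lies in verifying the sixteen entries $t_{ij}$ for $i,j\in\{1,2,3,4\}$ using the piecewise rule
\[
t_{ij} = \begin{cases} \binom{4}{j}-\binom{4-i}{j}, & j\leq 4-i, \\ \binom{4}{j}, & j\geq 5-i. \end{cases}
\]
For each fixed row index $i$, the threshold $4-i$ dictates the split: the first $4-i$ columns use the difference of binomial coefficients, while the remaining columns use the plain value $\binom{4}{j}$. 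For instance, row $i=2$ has threshold $4-i=2$, giving $t_{21}=\binom{4}{1}-\binom{2}{1}=2$, $t_{22}=\binom{4}{2}-\binom{2}{2}=5$, $t_{23}=\binom{4}{3}=4$, and $t_{24}=\binom{4}{4}=1$, which matches the second row of $T_2$. Carrying out the analogous bookkeeping for $i=1,3,4$ reproduces exactly the matrix $T_2$ in the statement.

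There is no genuine obstacle here beyond clerical care, since the computation is entirely mechanical. The one point that demands attention is the boundary of the piecewise definition, namely distinguishing the case $j=4-i$ (which still falls under the difference branch, so that $\binom{4-i}{j}=\binom{4-i}{4-i}=1$ is subtracted) from the case $j=5-i$ (the first column governed by the plain-binomial branch). Getting these boundary entries right, for example $t_{13}=\binom{4}{3}-\binom{3}{3}=3$ versus $t_{14}=\binom{4}{4}=1$ in the top row, is what makes the off-diagonal structure of $T_2$ come out correctly. Once all entries are confirmed, the claimed formula follows verbatim from Theorem \ref{main1}, completing the proof.
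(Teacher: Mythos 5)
Your proposal is correct and matches the paper's approach exactly: the paper derives Corollary~\ref{coro2} as an immediate specialization of Theorem~\ref{main1} to $m=4$, with no additional argument beyond substituting into the piecewise formula for $t_{ij}$. Your entry-by-entry verification of $T_2$ (including the boundary cases $j=4-i$ versus $j=5-i$) is accurate and is all that is required.
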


\noindent\textbf{Remark:}
With a little bit more effort, we can get $N_{4,1}=N(K_{4})=15$, $N_{4,2}= 175$, $ N_{4,3}= 2129$, and $ N_{4,4}= 25793$. \par

\section{An upper bound of $N(P_{m}\times P_{n})$}
In this section, we establish a sharp upper bound of $N(P_{m}\times P_{n})$ by developing a general scheme to classify all the connected sets in $\mathcal{C}(K_{m}\times P_{n})\setminus \mathcal{C}(P_{m}\times P_{n})$.

For any positive integers $m$ and $n$, since $P_{m}\times P_{n}$ is a subgraph of $K_{m}\times P_{n}$, we assume that $V(P_{m}\times P_{n})=V(K_{m}\times P_{n})$, where for any $i_1,i_2\in\{1,2,\dots,m\}$ and $j\in\{1,2,\dots,n\}$,  $v_{i_1,j}v_{i_2,j}\in E(P_{m}\times P_{n})$ if and only if $|i_1-i_2|=1$.
Then it is clear to see the identity mapping from $ V(P_{m}\times P_{n})$ to $V(K_{m}\times P_{n})$ induces a homomorphism from $P_{m}\times P_{n} $ to $K_{m}\times P_{n}$. Therefore,
 $\mathcal C(P_{m}\times P_{n} )\subseteq  \mathcal C(K_{m}\times P_{n} )$. In other words, we can obtain  $\mathcal C(P_{m}\times P_{n} )$ by removing some elements from $\mathcal C(K_{m}\times P_{n} )$, and any such element must contain a pair of vertices that are adjacent in $K_{m}\times P_{n}$ but not adjacent in $P_{m}\times P_{n}$ (the reverse may  not be true). In fact, it is indeed such pairs of vertices that make this element a connected set of $K_{m}\times P_{n}$ but not of $P_{m}\times P_{n}$.

 In the following, we shall provide a method to determine a lower bound of $|\mathcal C(K_{m}\times P_{n} )\setminus \mathcal C(P_{m}\times P_{n} )|$ by recurrence formulas, by which along with Theorem~\ref{main1}, an upper bound of $N(P_{m}\times P_{n})$ can be obtained.

Recall that $\mathcal C_{m,n}$ is the subset of $\mathcal C(K_{m}\times P_{n})$ that every element in $\mathcal C_{m,n}$ contains at least one vertex in each column of $K_{m}\times P_{n}$. Let $\mathcal C_{m,n}'$ be the subset of
 $\mathcal C(K_{m}\times P_{n} )\setminus \mathcal C(P_{m}\times P_{n} )$ that every element in $\mathcal C_{m,n}'$ contains at least one vertex in each column of $K_{m}\times P_{n}$. Then $\mathcal C_{m,n}'\subseteq \mathcal C_{m,n}$.
For any nonempty set $X\subseteq V(K_{m}\times P_{n})$, let
 \begin{eqnarray}
N(K_{m}\times P_{n};X)&=&|\{C\in \mathcal{C}_{m,n}: C\cap I_i=X\cap I_i~\text{whenever}~X\cap I_i\neq\emptyset\}|,\nonumber\\
N'(K_{m}\times P_{n};X)&=&|\{C\in \mathcal{C}_{m,n}': C\cap I_i=X\cap I_i~\text{whenever}~X\cap I_i\neq\emptyset\}|.\nonumber
 \end{eqnarray}

Then
\begin{equation}
N(K_{m}\times P_{n};X)=f_{m,n}^{|X|}~\text{if}~X\subseteq I_n,\nonumber
\end{equation}
 where $f_{m,n}^{|X|}$ is defined in the proof of Theorem~\ref{main1} and can be obtained by the recurrence formula (\ref{equ2.3}).
 Moreover, the following lemma clearly holds.
 \begin{lemma}\label{lem6}
For any positive integers $m$ and $n$,
\begin{equation}\label{equ3.6}
|\mathcal C_{m,n}'|=\sum_{\emptyset\neq S_n\subseteq I_n}N'(K_{m}\times P_{n};S_n),
\end{equation}
where
\begin{equation}\label{equ3.16}
 N'(K_{m}\times P_{n};S_n)=\sum_{\emptyset\neq X\subseteq I_{n-1} \atop X\cap S_{n-1}\neq \emptyset
 }N'(K_{m}\times P_{n};S_n\cup X)~\text{when}~n\ge 2.
 \end{equation}
 \end{lemma}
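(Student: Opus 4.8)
The plan is to establish both displayed identities by the same elementary device: partitioning the family $\mathcal C_{m,n}'$ according to how its members meet the last two columns. First I would observe that every $C \in \mathcal C_{m,n}'$ meets each column, so in particular $C \cap I_n$ is a nonempty subset of $I_n$; conversely, for a nonempty $S_n \subseteq I_n$ the defining condition of $N'(K_{m}\times P_{n};S_n)$ reduces to the single requirement $C \cap I_n = S_n$, since $I_n$ is the only column in which $S_n$ has support. Hence the sets $\{C \in \mathcal C_{m,n}' : C \cap I_n = S_n\}$, indexed by nonempty $S_n \subseteq I_n$, form a partition of $\mathcal C_{m,n}'$, and counting gives (\ref{equ3.6}) immediately.

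For (\ref{equ3.16}) I would fix a nonempty $S_n \subseteq I_n$ and refine the block $\{C : C \cap I_n = S_n\}$ further according to $X := C \cap I_{n-1}$. Exactly as above, $N'(K_{m}\times P_{n};S_n\cup X)$ counts those $C$ with both $C \cap I_n = S_n$ and $C \cap I_{n-1} = X$, so the distinct choices of $X$ partition the block, and summing over all $X$ that actually occur recovers $N'(K_{m}\times P_{n};S_n)$. It then remains only to identify precisely which $X$ occur, namely the nonempty $X \subseteq I_{n-1}$ with $X \cap S_{n-1} \neq \emptyset$.

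The one point that needs care---the main (though mild) obstacle---is justifying the index constraint $X \cap S_{n-1} \neq \emptyset$. Nonemptiness of $X$ is immediate from the fact that $C$ meets every column. For the second half I would argue via connectivity: since $C$ also meets the first column, $C \setminus I_n \neq \emptyset$, and the only edges of $K_{m}\times P_{n}$ leaving the $n$-th column are the edges $v_{i,n-1}v_{i,n}$, whose endpoints in $I_{n-1}$ corresponding to $S_n$ are by definition exactly $S_{n-1}$. A connecting path in $G[C]$ from $S_n$ to the remainder of $C$ must traverse such an edge, forcing $X \cap S_{n-1} = C \cap S_{n-1} \neq \emptyset$. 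This shows the constraint is necessary; and since any $X$ violating it yields an empty block, restricting the sum to $X \cap S_{n-1} \neq \emptyset$ discards only vanishing terms, which completes the verification of (\ref{equ3.16}) and hence the lemma.
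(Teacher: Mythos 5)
Your argument is correct and is exactly the double-counting the paper has in mind: the paper states this lemma with only the remark that it ``clearly holds,'' and your partition of $\mathcal C_{m,n}'$ by $C\cap I_n$ and then by $C\cap I_{n-1}$, together with the connectivity argument forcing $X\cap S_{n-1}\neq\emptyset$ (and the vanishing of the excluded terms), supplies the details the paper omits. No discrepancy with the paper's approach.
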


%
By (\ref{equ3.16}), it is routine to calculate the value of $N'(K_{m}\times P_{n};S_n)$ when $n=1,2$. Then for any $n\ge 3$, we provide a recurrence method in the next lemma to give a lower bound of $N'(K_{m}\times P_{n};S_n)$.

 \begin{lemma}\label{lem1}
 For any $n\ge 3$ and $C \in \mathcal C_{m,n}'$  with $C\cap I_{n}= S_{n}$ and $C\cap I_{n-1}= T_{n-1}$,
 \begin{enumerate}
 \item[(1)] if some component of $P_{m}\times P_{n}[S_{n}]$ is also a component of $P_{m}\times P_{n}[S_{n}\cup T_{n-1}]$,
 then
 $N'(K_{m}\times P_{n}; S_{n}\cup T_{n-1})=N(K_{m}\times P_{n-1}; T_{n-1} )$;
 \item[(2)] if each component of $P_{m}\times P_{n}[S_{n}]$ is not a component of $P_{m}\times P_{n}[S_{n}\cup T_{n-1}]$ and $c(P_{m}\times P_{n}[S_{n}\cup T_{n-1}])=c(P_{m}\times P_{n-1}[ T_{n-1}])$, then  $N'(K_{m}\times P_{n}; S_{n}\cup T_{n-1} )=N'(K_{m}\times P_{n-1}; T_{n-1} )$;
 \item[(3)] for the remaining case that each component of $P_{m}\times P_{n}[S_{n}]$ is not a component of $P_{m}\times P_{n}[S_{n}\cup T_{n-1}]$ and
 $c(P_{m}\times P_{n}[S_{n}\cup T_{n-1}])< c(P_{m}\times P_{n-1}[ T_{n-1}])$, we have
 \begin{equation}\label{equ3.11}
 N'(K_{m}\times P_{n}; S_{n}\cup T_{n-1} )= \sum_{\emptyset\neq X\subseteq I_{n-2} \atop X\cap T_{n-2}\neq \emptyset
 }N'(K_{m}\times P_{n}; S_{n}\cup T_{n-1} \cup X).
 \end{equation}
Moreover,
 let $\mathcal{X}$ be the set of subsets $X$ of $I_{n-2}$ with $X\cap T_{n-2}\neq\emptyset$ and satisfying one of the following conditions:
  \begin{enumerate}
  \item[(i)] some component of $P_{m}\times P_{n}[S_{n}\cup T_{n-1}]$ is also a component of $P_{m}\times P_{n}[S_{n}\cup T_{n-1}\cup X]$;
  \item[(ii)] each component of $P_{m}\times P_{n}[S_{n}\cup T_{n-1}]$ is not a component of $P_{m}\times P_{n}[S_{n}\cup T_{n-1}\cup X]$ and $c(P_{m}\times P_{n}[S_{n}\cup T_{n-1}\cup X])=c(P_{m}\times P_{n-2}[X])$;
   \item[(iii)]
each component of $P_{m}\times P_{n}[S_{n}\cup T_{n-1}]$ is not a component of $P_{m}\times P_{n}[S_{n}\cup T_{n-1}\cup X]$, $c(P_{m}\times P_{n}[S_{n}\cup T_{n-1}\cup X])<c(P_{m}\times P_{n-2}[X])$, and for any two integers $i_1$ and $i_2$, $l_{i_1}=l_{i_2}$ if and only if $l_{i_3}=l_{i_1}$ holds for all integers $i_3$ with $i_1\le i_3\le i_2$,
where the components of $P_{m}\times P_{n-2}[X]$  are $R_{1} ,R_{2},\dots R_r$, $k_s< k_t$ for
 all vertices $v_{k_s,n-2}$ in $R_s$ and  $v_{k_t,n-2}$ in $R_t$ whenever $s<t$, and
 $l_{i}$ (or $\overline{l}_{i}$) is the smallest (or largest)  integer $q$ such that $v_{q,n-2}$ is in the component of $P_{m}\times P_{n}[S_{n}\cup T_{n-1}\cup X]$ containing $R_i$ for all $i=1,\dots,r$.
 \end{enumerate}

 Then
 \begin{equation}\label{equ3.12}
 N'(K_{m}\times P_{n}; S_{n}\cup T_{n-1} )\ge \sum_{X\in \mathcal{X}}N'(K_{m}\times P_{n}; S_{n}\cup T_{n-1} \cup X),
  \end{equation}
and
 \begin{itemize}
 \item[(3.1)] $N'(K_{m}\times P_{n}; S_{n}\cup T_{n-1}\cup X)=N(K_{m}\times P_{n-2};X)$ if $X$ satisfies   condition (i);
 \item[(3.2)] $N'(K_{m}\times P_{n}; S_{n}\cup T_{n-1}\cup X )=N'(K_{m}\times P_{n-2}; X)$ if $X$ satisfies  condition (ii);
  \item [(3.3)]
$N'(K_{m}\times P_{n}; S_{n}\cup T_{n-1}\cup X)=N'(K_{m}\times P_{n-1}; X\cup \{v_{j,n-1}: l_{i}\le j\le \overline{l}_{i}, i=1,\dots,r\})$ if $X$ satisfies  condition (iii).
 \end{itemize}
  \end{enumerate}
 \end{lemma}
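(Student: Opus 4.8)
The backbone of the whole argument is a peeling bijection together with the observation that, because each column $I_j$ induces a clique in $K_m\times P_n$, passing from $K_m\times P_n$ to $K_m\times P_{n-1}$ interacts very rigidly with connectivity. Concretely, I would first record the following fact and use it repeatedly: if $C\in\mathcal C_{m,n}$ has $C\cap I_n=S_n$ and $C\cap I_{n-1}=T_{n-1}$, then $D:=C\setminus S_n$ belongs to $\mathcal C_{m,n-1}$ with $D\cap I_{n-1}=T_{n-1}$, and conversely every such $D$ (given the shadow overlap $S_{n-1}\cap T_{n-1}\neq\emptyset$, which holds whenever some $C$ exists) extends back to a unique $C\in\mathcal C_{m,n}$. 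The point is that $S_n$ is a single blob joined to $D$ only through the shadow edges $v_{i,n}v_{i,n-1}$, while $T_{n-1}\subseteq I_{n-1}$ is itself a clique and hence lies entirely inside one component of $D$; so $C$ connected forces $D$ connected and vice versa. Thus $K_m$-connectivity transfers cleanly along the peel $C\mapsto C\setminus S_n$, and the entire content of the lemma is to follow what the peel does to $P_m$-connectivity, i.e.\ to the defect responsible for membership in $\mathcal C'$.

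With this in hand I would dispatch (1) and (2). In (1) the run $R$ that survives as a component of $P_m\times P_n[S_n\cup T_{n-1}]$ is, by maximality, adjacent in $P_m\times P_n$ only to vertices of $I_{n-1}$ outside $T_{n-1}$; hence $R$ is already a full component of $P_m\times P_n[C]$ for every completion, so $C$ is disconnected in $P_m\times P_n$ automatically and $C\in\mathcal C'_{m,n}\Leftrightarrow C\in\mathcal C_{m,n}$. The peel then gives $N'(K_m\times P_n;S_n\cup T_{n-1})=N(K_m\times P_{n-1};T_{n-1})$. In (2) the hypotheses say exactly that every $S_n$-run attaches to $T_{n-1}$ and that no $S_n$-run bridges two $T_{n-1}$-runs (this is what $c(P_m\times P_n[S_n\cup T_{n-1}])=c(P_m\times P_{n-1}[T_{n-1}])$ encodes); consequently adding $S_n$ to $D$ neither creates nor destroys a $P_m$-component, so $c(P_m\times P_n[C])=c(P_m\times P_{n-1}[D])$ and $C\in\mathcal C'_{m,n}\Leftrightarrow D\in\mathcal C'_{m,n-1}$, yielding $N'(K_m\times P_n;S_n\cup T_{n-1})=N'(K_m\times P_{n-1};T_{n-1})$.

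Case (3) is the genuine obstruction, because now some $S_n$-run merges two $T_{n-1}$-runs, so the defect can be repaired by column $n$ and $D$ disconnected no longer forces $C$ disconnected. My plan is to refuse to peel and instead expand one more column: (3.11) is the exact analogue of Lemma~\ref{lem6}, summing over all admissible $X=C\cap I_{n-2}$ (those with $X\cap T_{n-2}\neq\emptyset$). For each $X$ I would then rerun the same trichotomy, now comparing $P_m\times P_n[S_n\cup T_{n-1}]$ with $P_m\times P_n[S_n\cup T_{n-1}\cup X]$. Conditions (i) and (ii) are literally (1) and (2) shifted one column to the left, treating $S_n\cup T_{n-1}$ as the rigid head; applying the peel twice (first removing $S_n$, then $T_{n-1}$, each step legitimate by the backbone fact and the overlaps $S_{n-1}\cap T_{n-1}\neq\emptyset$, $T_{n-2}\cap X\neq\emptyset$) then produces the closed forms (3.1) and (3.2), namely $N(K_m\times P_{n-2};X)$ and $N'(K_m\times P_{n-2};X)$.

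The hard part, where I expect the real work to sit, is (3.3) under the interval hypothesis in (iii). The aim is to re-encode, inside a single synthetic column, the way $S_n\cup T_{n-1}$ merges the runs $R_1,\dots,R_r$ of $X$. I would prove that replacing the two-column head by the column-$(n-1)$ set $\{v_{j,n-1}:l_i\le j\le\overline{l}_i,\ i=1,\dots,r\}$ leaves the induced $P_m$-component structure on $X$ and the columns to its left unchanged, so that the $\mathcal C'$-count is preserved and equals $N'(K_m\times P_{n-1};X\cup\{v_{j,n-1}:l_i\le j\le\overline{l}_i\})$. The combinatorial condition in (iii)---that $l_{i_1}=l_{i_2}$ iff $l_{i_3}=l_{i_1}$ for every $i_3$ between $i_1$ and $i_2$---is precisely the statement that the merging pattern on $R_1,\dots,R_r$ is non-crossing, so that each merged block is an interval of consecutive runs and the spans $[l_i,\overline{l}_i]$ are pairwise nested-or-disjoint; this is exactly what makes them realizable as the runs of an honest subset of one column and what lets the synthetic column reproduce the connectivity and nothing more. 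Verifying this faithful re-encoding is the delicate step, since a crossing merge cannot be represented within a single column and would break (3.3). Finally, since $\mathcal X$ retains only the admissible $X$ of types (i), (ii) and the non-crossing type (iii), discarding the crossing case-(iii) sets whose $N'$-contributions are nonnegative, the exact identity (3.11) degrades to the stated lower bound (3.12); the sharpness for $m=3,4$ will then be immediate from the fact that at most two runs can occur in a column of $P_m\times P_n$ when $m\le4$, so no crossing configuration ever arises.
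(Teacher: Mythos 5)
Your proposal is correct and follows essentially the same route as the paper: peel off the last column(s) using the fact that each column induces a clique in $K_m\times P_n$, track whether the head creates, preserves, or merges components to get (1), (2), (3.1), (3.2), and re-encode the merging pattern of the runs $R_1,\dots,R_r$ into a single synthetic column for (3.3). The only small imprecision is describing the spans $[l_i,\overline{l}_i]$ as ``nested-or-disjoint''; the hypothesis in (iii) actually forces them to be identical or disjoint (indeed separated by a gap in column $n-2$), which is what makes the single-column re-encoding faithful, exactly as in the paper's argument.
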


For an illustration of Lemma~\ref{lem1}, see some examples shown in Figures~\ref{fig6} and~\ref{fig6'}, where $S_n=C\cap I_n$,  $T_{n-1}=C\cap I_{n-1}$ and $X=C\cap I_{n-2}$.

\begin{figure}[h]
\centering
\scalebox{0.9}[0.9]{\includegraphics{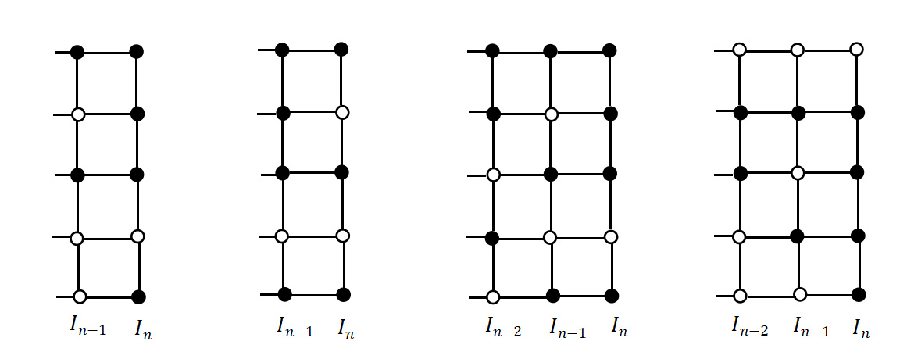}}\\
\hspace{-0.4cm} (a) \hspace{2.4 cm} (b) \hspace{2.9 cm} (c) \hspace{3 cm} (d)
\caption{Examples (a), (b), (c), (d) for Lemma~\ref{lem1} (1), (2), (3) (i), (3) (ii), respectively, where $C$ is the set of solid vertices}
\label{fig6}
\end{figure}

\begin{figure}[h]
\centering
\scalebox{0.85}[0.85]{\includegraphics{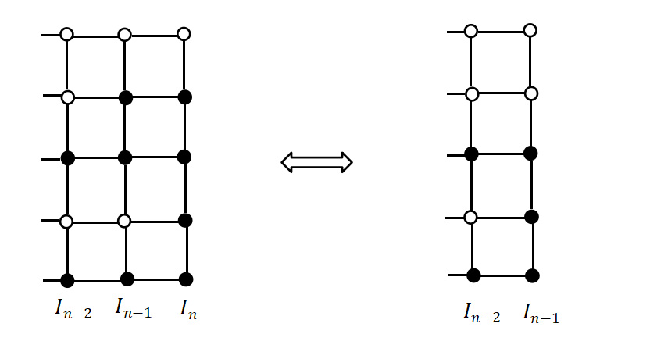}}
\caption{An example for the illustration of Lemma~\ref{lem1} (3.3), where the graph on the left is an example for Lemma~\ref{lem1} (3) (iii) with $C$  the set of solid vertices}
\label{fig6'}
\end{figure}

Now we prove Lemma~\ref{lem1}.

\noindent\textit{Proof of Lemma~\ref{lem1}.}
 For (1),
 since some component of $P_{m}\times P_{n}[S_{n}]$ is also a component of $P_{m}\times P_{n}[S_{n}\cup T_{n-1}]$,
 both $S_n$ and $S_{n}\cup T_{n-1}$ are not connected sets in $P_m\times P_n$. Then it can be easily verified  that any $C$ with $C\cap I_{n}= S_{n}$ and $C\cap I_{n-1}= T_{n-1}$ is a member in  $\mathcal{C}_{m,n}'$ if and only if $C\setminus S_n$ is a member of $\mathcal{C}_{m,n-1}$ with $(C\setminus S_n)\cap I_{n-1}= T_{n-1}$. (1) holds.

 For (2),
 every component of $P_{m}\times P_{n}[S_{n}\cup T_{n-1}]$ is the union (via vertex identifications) of one component of $P_{m}\times P_{n-1}[T_{n-1}]$ and some (maybe 0) components of $P_{m}\times P_{n}[S_{n}]$.
 Then it is easy to see that any $C$ with $C\cap I_{n}= S_{n}$ and $C\cap I_{n-1}= T_{n-1}$ is a member in  $\mathcal{C}_{m,n}'$ if and only if $C\setminus S_n$ is a member of $\mathcal{C}_{m,n-1}'$ with $(C\setminus S_n)\cap I_{n-1}= T_{n-1}$. (2) holds.

 For (3), note that in this case, there must exist a component of $P_{m}\times P_{n}[S_{n}\cup T_{n-1}]$ that is the union of one component of $P_{m}\times P_{n}[S_{n}]$ and at least two components of $P_{m}\times P_{n-1}[T_{n-1}]$. Recall that $T_{n-2}$ is the set of vertices in $I_{n-2}$ that are adjacent to vertices in $T_{n-1}$. Then all the statements except for (3.3) can be similarly verified.

 For (3.3), for any triple $(S_n,T_{n-1}, X)$ satisfying condition (iii),  there must further exist a component of $P_{m}\times P_{n}[S_{n}\cup T_{n-1}\cup X]$ that is the union of one component of $P_{m}\times P_{n}[S_{n}\cup T_{n-1}]$ and at least two  components of $P_{m}\times P_{n-2}[X]$, which implies that $l_s=l_t$ for some distinct $R_s$ and $R_t$.
Moreover, all the components $R_i$ of $P_{m}\times P_{n-2}[X]$ with the same value of $l_i$ belong to the same component in $P_{m}\times P_{n}[S_{n}\cup T_{n-1}\cup X]$, and vice versa.
Then by the condition that for any two integers $i_1$ and $i_2$, $l_{i_1}=l_{i_2}$ if and only if $l_{i_3}=l_{i_1}$ holds for all $i_1\le i_3\le i_2$, we have that all vertices $v_{p,n-2}\in X$ with $l_i\le p\le \overline{l}_i$ belong to the same component of
$P_{m}\times P_{n}[S_{n}\cup T_{n-1}\cup X]$.
Note that $l_i=l_{i'}$ if and only if $\overline{l}_i=\overline{l}_{i'}$, and further, all  intervals $(l_i,\overline{l}_i)$ are either identical or pairwise disjoint.
Then all  vertices $v_{p,n-2}\in X$ with $l_i\le p\le \overline{l}_i$ belong to the same component in
 $P_{m}\times P_{n-1}[X\cup \{v_{j,n-1}: l_{i}\le j\le \overline{l}_{i}, i=1,\dots,r\}]$.
Hence $N'(K_{m}\times P_{n}; S_{n}\cup T_{n-1}\cup X)=N'(K_{m}\times P_{n-1}; X\cup \{v_{j,n-1}: l_{i}\le j\le \overline{l}_{i}, i=1,\dots,r\})$.
The example given in Figure~\ref{fig6'} also explains that there is a natural bijection between such two sets.

 The proof is complete.
\hfill$\Box$

\noindent\textbf{Remark.} Note that for the lower bound given in (\ref{equ3.12}), the only excluded case has the characteristics that each component of $P_{m}\times P_{n}[S_{n}\cup T_{n-1}]$ is not a component of $P_{m}\times P_{n}[S_{n}\cup T_{n-1}\cup X]$, $c(P_{m}\times P_{n}[S_{n}\cup T_{n-1}\cup X])<c(P_{m}\times P_{n-2}[X])$, and there exist  integers $i_1, i_2, i_3$ with $i_1\le i_3\le i_2$ such that $l_{i_1}=l_{i_2}$ but $l_{i_1}\neq l_{i_3}$.
Unfortunately, sometimes we are not able to find a recurrence formula to calculate $N'(K_m\times P_n;S_n\cup T_{n-1}\cup X)$ for this case. See an example in Figure~\ref{fig1}.
For this kind of $C$ with $S_n=C\cap I_n$,  $T_{n-1}=C\cap I_{n-1}$ and $X=C\cap I_{n-2}$, it seems extremely difficult to find an $X'\subseteq I_{n-1}$ such that $N'(K_m\times P_n;S_n\cup T_{n-1}\cup X)=N'(K_m\times P_{n-1};X'\cup X)$.
However, such issues do not arise when $m=3,4$,
which we shall see  in the next section.

\begin{figure}[h]
\centering
\scalebox{0.85}[0.85]{\includegraphics{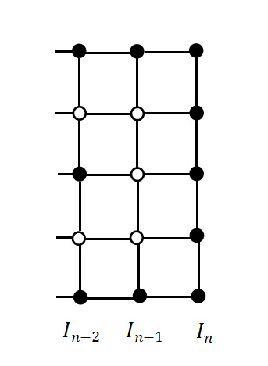}}
\caption{An example of the case that Lemma~\ref{lem1} can not deal with, where $C$ is the set of solid vertices}
\label{fig1}
\end{figure}

Now it is clear to see that Lemmas~\ref{lem6} and~\ref{lem1} together provide a lower bound of $|\mathcal{C}_{m,n}'|$ for any positive integer $n$, denoted by $N'_{m,n}$, where $N'_{m,n}$ can be calculated by multistep recurrence formulas.
We shall show in the next section that this lower bound is tight, i.e.,  $N'_{m,n}=|\mathcal{C}_{m,n}'|$,
when $m=3,4$.
Then by Theorem~\ref{main1}, we obtain a sharp upper bound for $ N(P_{m}\times P_{n})$.
\begin{theorem}\label{main}
For any positive integers $m$ and $n$,	
$$ N(P_{m}\times P_{n})\le \sum\limits_{k=1}^{n}(N_{m,k}-N'_{m,k})\cdot(n-k+1),$$
where the equality holds when $m=3,4$.
\end{theorem}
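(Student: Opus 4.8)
The plan is to turn the two-term difference $N(K_m\times P_n)-|\mathcal C(K_m\times P_n)\setminus\mathcal C(P_m\times P_n)|$ into a single sum indexed by the number of columns a set occupies, in exact parallel with the derivation of (\ref{equ2.1}). First I would record the structural fact that any connected set of $K_m\times P_n$ meets a contiguous block of columns: if such a set skipped some column $j''$ while meeting columns on both sides of it, then no edge of $K_m\times P_n$ (which joins only vertices in a common column or in a common row of adjacent columns) could link the two sides, contradicting connectedness. The same interval property therefore holds for every member of $\mathcal C(K_m\times P_n)\setminus\mathcal C(P_m\times P_n)$.

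Next I would set up the placement bijection. The subgraph of $K_m\times P_n$ induced on any $k$ consecutive columns is isomorphic to $K_m\times P_k$, and likewise the induced subgraph of $P_m\times P_n$ on those columns is isomorphic to $P_m\times P_k$; since a set meeting exactly these $k$ columns lives entirely inside them, its connectedness in $K_m\times P_n$ (resp.\ $P_m\times P_n$) is decided inside the window. Hence, after translating the window to columns $1,\dots,k$, such a set that is connected in $K_m\times P_n$ but not in $P_m\times P_n$ and meets every one of its columns is exactly an element of $\mathcal C_{m,k}'$. Counting the $n-k+1$ translates of a width-$k$ window yields
\[
|\mathcal C(K_m\times P_n)\setminus\mathcal C(P_m\times P_n)|=\sum_{k=1}^{n}(n-k+1)\,|\mathcal C_{m,k}'|,
\]
which mirrors (\ref{equ2.1}) term by term.

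Combining this with the identity $N(P_m\times P_n)=N(K_m\times P_n)-|\mathcal C(K_m\times P_n)\setminus\mathcal C(P_m\times P_n)|$ and with (\ref{equ2.1}) gives the exact formula
\[
N(P_m\times P_n)=\sum_{k=1}^{n}(n-k+1)\bigl(N_{m,k}-|\mathcal C_{m,k}'|\bigr).
\]
Since Lemmas~\ref{lem6} and~\ref{lem1} produce the lower bound $N'_{m,k}\le|\mathcal C_{m,k}'|$ and every coefficient $n-k+1$ is nonnegative, replacing $|\mathcal C_{m,k}'|$ by $N'_{m,k}$ can only increase the right-hand side, which is precisely the asserted upper bound.

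The equality for $m=3,4$ amounts to showing that this lower bound is attained, i.e.\ $N'_{m,k}=|\mathcal C_{m,k}'|$ for every $k$, and this is the genuinely delicate point. By (\ref{equ3.12}) the only loss occurs on the single configuration excluded in the Remark and pictured in Figure~\ref{fig1}: a ``crossing'' pattern in which components $R_{i_1},R_{i_2}$ with $i_1<i_2$ merge in $P_m\times P_n[S_n\cup T_{n-1}\cup X]$ while an intermediate $R_{i_3}$ does not. The main obstacle is to rule this pattern out, and I expect the argument to be that three or four rows simply leave too little room to realize such a crossing, so every inequality in (\ref{equ3.12}) becomes an equality and no term is undercounted; this verification, together with the explicit evaluation of the recurrences, is carried out in Section 4, after which the displayed identity collapses to the claimed equality.
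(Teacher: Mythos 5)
Your proposal is correct and follows essentially the same route as the paper: the translation/window decomposition mirroring (\ref{equ2.1}) to obtain the exact identity $N(P_m\times P_n)=\sum_{k=1}^{n}(n-k+1)\bigl(N_{m,k}-|\mathcal C_{m,k}'|\bigr)$, the substitution of the lower bound $N'_{m,k}\le|\mathcal C_{m,k}'|$ coming from Lemmas~\ref{lem6} and~\ref{lem1}, and the deferral of the equality cases to the Section~4 analysis (Lemmas~\ref{coro3} and~\ref{coro4}). Your guess about why $m=3,4$ give equality is also exactly the paper's reason: a nonempty subset of a column induces at most two components when $m\le 4$, so the excluded ``crossing'' configuration of the Remark (which requires three distinct components $R_{i_1},R_{i_3},R_{i_2}$ in one column) cannot occur, and (\ref{equ3.12}) holds with equality at every step of the recursion.
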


The proof for the equality case will be given in Lemmas~\ref{coro3} and~\ref{coro4} by explaining that the equality of (\ref{equ3.12}) holds for all corresponding cases when $m=3,4$.

\section{Calculating $N(P_3\times P_n)$ and $N(P_4\times P_n)$}
In this section, we present enumeration formulas for $N(P_3\times P_n)$ and $N(P_4\times P_n)$ in Subsections 4.1 and 4.2, respectively.

In the following, we simply write $N(K_{m}\times P_{n};  \{x_{1},x_{2},...,x_{j}\})$ and $N'(K_{m}\times P_{n}; \mathcal \{x_{1},x_{2},...,x_{j}\})$ as
$N(K_{m}\times P_{n}; x_{1},x_{2},...,x_{j})$ and $N'(K_{m}\times P_{n}; x_{1},x_{2},...,x_{j})$, respectively.

\subsection{Calculating $N(P_3\times P_n)$}
In this subsection, we shall apply Corollary~\ref{coro1}, Lemmas~\ref{lem6} and~\ref{lem1} to calculate $N(P_3\times P_n)$.

For any positive integer $n$ and $i=1,2,3$, assume that $v_{4,n}=v_{1,n}$, and let
$$a_{n}^{i}=N'(K_{3}\times P_{n}; v_{i,n}),~
b_{n}^{i}= N'(K_{3}\times P_{n};  v_{i,n}, v_{i+1,n}),~
c_{n}=N'(K_{3}\times P_{n}; I_n).$$
Referring to Figure~\ref{fig2}, it is clear to see that
 $a_{n}^{1}=a_{n}^{3}$ and $b_{n}^{1}=b_{n}^{2}. $
\begin{figure}[h]
\centering
\scalebox{0.65}[0.65]{\includegraphics{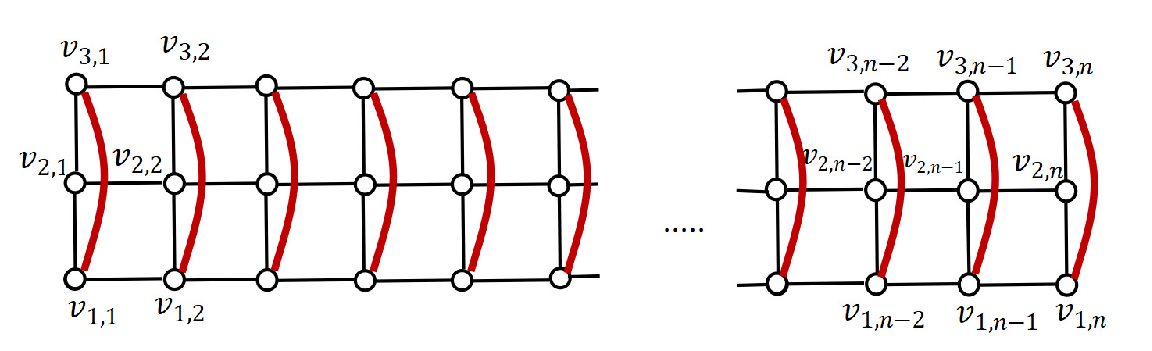}}
\caption{The graph $K_{3}\times P_{n}$}
\label{fig2}
\end{figure}

\begin{lemma}\label{coro3}
For any $n\ge 2$ and $C \in \mathcal C_{3,n}'$ with $C\cap I_{n}= S_{n}$ and $C\cap I_{n-1}= T_{n-1}$,\\
(1) if $c(P_{3}\times P_{n}[S_{n}\cup T_{n-1}])> c(P_{3}\times P_{n-1}[ T_{n-1}])$, then $N'(K_{3}\times P_{n}; S_{n}\cup T_{n-1} )=N(K_{3}\times P_{n-1}; T_{n-1} )$;\\
(2) if $c(P_{3}\times P_{n}[S_{n}\cup T_{n-1}])=c(P_{3}\times P_{n-1}[ T_{n-1}])$, then $N'(K_{3}\times P_{n}; S_{n}\cup T_{n-1} )=N'(K_{3}\times P_{n-1}; T_{n-1} )$;\\
(3) otherwise, $c(P_{3}\times P_{n}[S_{n}\cup T_{n-1}])<c(P_{3}\times P_{n-1}[ T_{n-1}])$, which implies that $S_{n}=  I_{n}$ and $T_{n-1}=\{v_{1,n-1}, v_{3,n-1}\}$.
  Let $d_n=N'(K_{3}\times P_{n};I_n\cup\{v_{1,n-1},v_{3,n-1}\})$. Then $d_2=0$, and  $d_n=2a_{n-2}^{1}+2b_{n-2}^{1}+d_{n-1}+c_{n-2}$ when $n\ge 3$.
\end{lemma}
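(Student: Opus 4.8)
The plan is to obtain parts (1) and (2) directly from Lemma~\ref{lem1}(1),(2) by translating their qualitative ``component'' hypotheses into the component-count comparisons stated here, and to treat part (3) via the recurrence (\ref{equ3.11}), enumerating the finitely many admissible single-column configurations available when $m=3$. The whole argument rests on one dichotomy special to $m=3$: a component of $P_3\times P_n[S_n]$ stays a component of $P_3\times P_n[S_n\cup T_{n-1}]$ (i.e.\ some component of $S_n$ is ``free'') if and only if $c(P_3\times P_n[S_n\cup T_{n-1}])>c(P_3\times P_{n-1}[T_{n-1}])$. Writing the count of the union as $c(P_3\times P_{n-1}[T_{n-1}])+f-r$, where $f$ is the number of free $S_n$-components and $r\ge 0$ records mergers of distinct components of $T_{n-1}$ through $S_n$, the key point is that for $m=3$ the events $f\ge 1$ and $r\ge 1$ are mutually exclusive: a merger of two components of $T_{n-1}$ forces $T_{n-1}=\{v_{1,n-1},v_{3,n-1}\}$ (the only single-column set inducing two components of $P_3$) and an $S_n$-component meeting rows $1$ and $3$, which in $P_3$ is possible only when $S_n=I_n$, a single component with $f=0$. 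Hence $f\ge 1\iff$ the count increases, $f=r=0\iff$ it is unchanged, and $r\ge 1\iff$ it decreases; matching these with Lemma~\ref{lem1}(1),(2) gives parts (1),(2), and the last case simultaneously forces $S_n=I_n$ and $T_{n-1}=\{v_{1,n-1},v_{3,n-1}\}$ in part (3).

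For the recurrence I would apply (\ref{equ3.11}) with $S_n=I_n$ and $T_{n-1}=\{v_{1,n-1},v_{3,n-1}\}$. Then $T_{n-2}=\{v_{1,n-2},v_{3,n-2}\}$, so the sum runs over the six nonempty $X\subseteq I_{n-2}$ meeting $T_{n-2}$, namely $\{v_{1,n-2}\}$, $\{v_{3,n-2}\}$, $\{v_{1,n-2},v_{2,n-2}\}$, $\{v_{2,n-2},v_{3,n-2}\}$, $I_{n-2}$, and $\{v_{1,n-2},v_{3,n-2}\}$. Each is to be classified by conditions (i)--(iii) of Lemma~\ref{lem1}(3). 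Since the single component of $P_3\times P_n[S_n\cup T_{n-1}]$ is always extended by such an $X$, condition (i) never occurs. The first five sets each induce a connected subgraph, so they satisfy condition (ii), and by subcase (3.2) together with the symmetries $a_{n-2}^1=a_{n-2}^3$ and $b_{n-2}^1=b_{n-2}^2$ they contribute $a_{n-2}^1+a_{n-2}^1+b_{n-2}^1+b_{n-2}^1+c_{n-2}$. The remaining set $\{v_{1,n-2},v_{3,n-2}\}$ induces two components that are both absorbed into one, so it satisfies condition (iii) with $r=2$, $l_1=l_2=1$, $\overline l_1=\overline l_2=3$; by subcase (3.3) it contributes $N'(K_3\times P_{n-1};\{v_{1,n-2},v_{3,n-2}\}\cup I_{n-1})$, which is exactly $d_{n-1}$. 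Summing yields $d_n=2a_{n-2}^1+2b_{n-2}^1+c_{n-2}+d_{n-1}$.

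It then remains to record the base case and to note why the sum is exact. For $n=2$ the pattern $I_2\cup\{v_{1,1},v_{3,1}\}$ determines a unique candidate $C$, which is the path $v_{1,1}v_{1,2}v_{2,2}v_{3,2}v_{3,1}$ and hence already connected in $P_3\times P_2$, so $C\notin\mathcal C_{3,2}'$ and $d_2=0$. Since all six admissible $X$ fall under condition (ii) or (iii) and the exceptional configuration flagged in the Remark following Lemma~\ref{lem1} (where (\ref{equ3.12}) is a strict lower bound) cannot arise for $m=3$, the set $\mathcal X$ exhausts the admissible $X$ and (\ref{equ3.11}) holds with equality, so the computed $d_n$ is exact rather than a bound.

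The routine but delicate part is the bookkeeping in part (3): checking that each admissible $X$ lands in exactly one of (i)--(iii), that subcase (3.3) sends $\{v_{1,n-2},v_{3,n-2}\}$ precisely to the pattern defining $d_{n-1}$, and that the Remark's exceptional case is genuinely absent for $m=3$. I expect the main conceptual obstacle to be the mutual-exclusion statement underpinning parts (1)--(3)---that for $m=3$ a free $S_n$-component and a genuine bridging of $T_{n-1}$ cannot coexist---since this is what lets the three qualitative cases of Lemma~\ref{lem1} collapse cleanly into the three inequalities $c(P_3\times P_n[S_n\cup T_{n-1}])\gtreqless c(P_3\times P_{n-1}[T_{n-1}])$; once it is established the rest is finite case-checking.
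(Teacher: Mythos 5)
Your proposal is correct and follows essentially the same route as the paper: parts (1) and (2) are reduced to Lemma~\ref{lem1}(1),(2) via the observation that a nonempty subset of a column of $P_3$ has at most two components (your mutual-exclusion argument for $f$ and $r$ is a more explicit version of the paper's one-line justification), and part (3) is obtained by expanding (\ref{equ3.11}) over the same six sets $X$, applying (3.2) to the five connected ones and (3.3) to $\{v_{1,n-2},v_{3,n-2}\}$ to produce the $d_{n-1}$ term. The bookkeeping, the base case $d_2=0$, and the observation that the exceptional case of the Remark cannot occur for $m=3$ (so (\ref{equ3.12}) is tight) all match the paper's proof.
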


\begin{proof}
Since $1\le c(X)\le 2$ holds for any $\emptyset\neq X\subseteq I_k$ with $k=1,2,\dots,n$, (1) and (2) can be easily verified according to Lemma~\ref{lem1} (1) and (2).

For (3), it is clear that $S_{n}=  I_{n}$ and $T_{n-1}=\{v_{1,n-1}, v_{3,n-1}\}$.
Then it remains to show the recurrence formula for  $d_n$.

Obviously, $d_2=0$. Assume $n\ge 3$ in the following. Then by (\ref{equ3.11}),
\begin{eqnarray}
d_{n}&=&N'(K_{3}\times P_{n};I_n\cup\{v_{1,n-1},v_{3,n-1}\})\nonumber\\
&=&N'(K_{3}\times P_{n};I_n\cup\{v_{1,n-1},v_{3,n-1},v_{1,n-2}\})+N'(K_{3}\times P_{n};I_n\cup\{v_{1,n-1},v_{3,n-1}, v_{3,n-2}\})\nonumber\\
&&+N'(K_{3}\times P_{n};I_n\cup\{v_{1,n-1},v_{3,n-1}, v_{1,n-2},v_{2,n-2}\})\nonumber\\
&&+N'(K_{3}\times P_{n};I_n\cup\{v_{1,n-1},v_{3,n-1},v_{1,n-2}, v_{3,n-2}\})\nonumber\\
&&+N'(K_{3}\times P_{n};I_n\cup\{v_{1,n-1},v_{3,n-1},v_{2,n-2},v_{3,n-2}\})\nonumber\\
&&+N'(K_{3}\times P_{n};I_n\cup\{v_{1,n-1},v_{3,n-1}\}\cup I_{n-2}).\nonumber
\end{eqnarray}

Note that by Lemma~\ref{lem1} (3.3),
\begin{eqnarray}
&&N'(K_{3}\times P_{n}; I_{n}\cup \{v_{1,n-1},v_{3,n-1},v_{1,n-2},v_{3,n-2}\})\nonumber\\
&=&N'(K_{3}\times P_{n-1}; I_{n-1}\cup \{v_{1,n-2},v_{3,n-2}\})\nonumber\\
&=&d_{n-1}, \nonumber
\end{eqnarray}
with an example for explanation in Figure~\ref{fig3}.

\begin{figure}[h]
\centering
\scalebox{0.85}[0.85]{\includegraphics{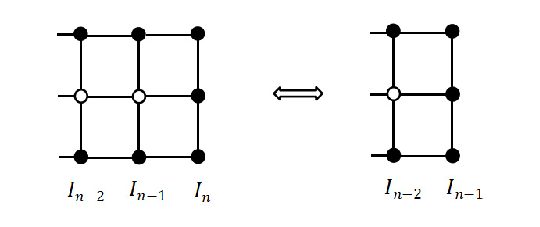}}
\caption{An example}
\label{fig3}
\end{figure}

Then by Lemma~\ref{lem1} (3.2),
\begin{eqnarray}
d_n&=&N'(K_{3}\times P_{n-2}; v_{1,n-2})+N'(K_{3}\times P_{n-2};v_{3,n-2})\nonumber\\
&&+N'(K_{3}\times P_{n-2};v_{1,n-2},v_{2,n-2})
+d_{n-1}\nonumber\\
&&+N'(K_{3}\times P_{n-2}; v_{2,n-2},v_{3,n-2})+N'(K_{3}\times P_{n-2};I_{n-2})\nonumber\\
&=&a_{n-2}^{1}+a_{n-2}^{3}+b_{n-2}^{1}+d_{n-1}+b_{n-2}^{2}+c_{n-2}\nonumber\\
&=&2a_{n-2}^{1}+2b_{n-2}^{1}+d_{n-1}+c_{n-2},\nonumber
\end{eqnarray}
which also implies that (\ref{equ3.12}) is tight when $m=3$.

The proof is complete.
\end{proof}

Since the lower bound $N'_{m,n}$ is sharp when $m=3$, we are able to determine the exact value of $|\mathcal C_{3,n}'|$ by recurrence formulas.
\begin{lemma}\label{lem2}
For any positive integer $n$,
$$|\mathcal C_{3,n}'|=2 a_{n}^{1}+a_{n}^{2}+2 b_{n}^{1}+b_{n}^{3}+ c_{n},$$
where
$a_{1}^{1}=a_{1}^{2}=b_{1}^{1}=c_{1}=0$, $b_{1}^{3}=1$, and when $n\ge 2$,
 \begin{align}\left\{\begin{aligned}
a_{n}^{1}&=a_{n-1}^{1}+b_{n-1}^{1}+b_{n-1}^{3}+c_{n-1},\\
a_{n}^{2}&= a_{n-1}^{2}+2b_{n-1}^{1}+c_{n-1},\\
b_{n}^{1}&=a_{n-1}^{1}+a_{n-1}^{2}+2b_{n-1}^{1}+b_{n-1}^{3}+c_{n-1},\\
b_{n}^{3}&=2f_{3,n-1}^{1}+2f_{3,n-1}^{2}+b_{n-1}^{3}+c_{n-1},\\
c_{n}&=2a_{n-1}^{1}+a_{n-1}^{2}+2b_{n-1}^{1}+d_{n}+c_{n-1}.\nonumber
 \end{aligned}\right.
 \end{align}

 \end{lemma}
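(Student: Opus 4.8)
The plan is to first collapse $|\mathcal C_{3,n}'|$ into the five listed quantities, then derive each recurrence by a single application of the column-peeling identity of Lemma~\ref{lem6} followed by the trichotomy of Lemma~\ref{coro3}. By equation~(\ref{equ3.6}), $|\mathcal C_{3,n}'|=\sum_{\emptyset\neq S_n\subseteq I_n}N'(K_3\times P_n;S_n)$, and since $|I_n|=3$ the nonempty $S_n$ are the three singletons, the three pairs, and $I_n$, contributing $a_n^1+a_n^2+a_n^3+b_n^1+b_n^2+b_n^3+c_n$. The reflection of $P_3$ that swaps rows $1$ and $3$ is an automorphism of $K_3\times P_n$ preserving $P_3\times P_n$, whence $a_n^1=a_n^3$ and $b_n^1=b_n^2$; this collapses the sum to the asserted $2a_n^1+a_n^2+2b_n^1+b_n^3+c_n$. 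For the base case $n=1$, I would note $K_3\times P_1\cong K_3$ and observe that the unique subset of $V(K_3)$ that is connected in $K_3$ but not in $P_3$ is the non-adjacent pair $\{v_1,v_3\}$; hence $b_1^3=1$ and $a_1^1=a_1^2=b_1^1=c_1=0$.

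For $n\ge 2$, each of the five quantities is $N'(K_3\times P_n;S_n)$ for a fixed $S_n\subseteq I_n$, and I would expand it through identity~(\ref{equ3.16}), summing $N'(K_3\times P_n;S_n\cup X)$ over all $\emptyset\neq X\subseteq I_{n-1}$ meeting $S_{n-1}$, the set of neighbours of $S_n$ in column $n-1$. For each such $X=T_{n-1}$, Lemma~\ref{coro3} replaces the term by a level-$(n-1)$ quantity according to whether adjoining $T_{n-1}$ to $S_n$ raises, preserves, or lowers the component count relative to $P_3\times P_{n-1}[T_{n-1}]$. The decisive simplification particular to $m=3$ is that every nonempty subset of a column induces either one or two components in $P_3$ (two exactly for $\{v_1,v_3\}$), so the component count never exceeds two and each of the finitely many pairs $(S_n,T_{n-1})$ falls cleanly into one of the three cases.

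Carrying this out, the three ``interior'' quantities are the gentlest: for $a_n^1,a_n^2,b_n^1$, where $S_n$ is connected in $P_3$, every admissible $T_{n-1}$ lands in case (2) of Lemma~\ref{coro3}, so each summand reduces to one of $a_{n-1}^i,b_{n-1}^i,c_{n-1}$, and grouping by symmetry yields the first three recurrences. For $b_n^3$, where $S_n=\{v_{1,n},v_{3,n}\}$ is itself disconnected in $P_3$, the choices $T_{n-1}\in\{\{v_{1,n-1}\},\{v_{3,n-1}\},\{v_{1,n-1},v_{2,n-1}\},\{v_{2,n-1},v_{3,n-1}\}\}$ leave one of $v_{1,n},v_{3,n}$ isolated, triggering case (1) and producing the \emph{unprimed} counts $N(K_3\times P_{n-1};\cdot)=f_{3,n-1}^1,f_{3,n-1}^2$ (supplied by Corollary~\ref{coro1}), while $T_{n-1}\in\{\{v_{1,n-1},v_{3,n-1}\},I_{n-1}\}$ gives case (2); summing yields the $b_n^3$ recurrence. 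For $c_n$, with $S_n=I_n$, all $T_{n-1}$ but one give case (2), whereas $T_{n-1}=\{v_{1,n-1},v_{3,n-1}\}$ bridges its two components via $S_n$ and falls into case (3), contributing exactly the term $d_n$ of Lemma~\ref{coro3}(3); here the multistep recurrence for $d_n$ is invoked to close the system.

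The main obstacle is not any single computation but the bookkeeping: I must check that the enumeration of admissible $X$ is exhaustive for each $S_n$, that every pair $(S_n,T_{n-1})$ is assigned to the correct branch of Lemma~\ref{coro3}, and---crucially---that no pair ever falls into the irreducible configuration flagged in the Remark and Figure~\ref{fig1}. This last point is precisely the content of sharpness for $m=3$: because the component count is capped at two, the component-decreasing branch is realized only through the single $d_n$ term, whose recurrence in Lemma~\ref{coro3}(3) is an exact equality rather than the mere inequality of~(\ref{equ3.12}). Verifying these facts shows that the five recurrences, together with the $d_n$ recurrence, form a closed self-consistent system that computes $|\mathcal C_{3,n}'|$ exactly, as claimed.
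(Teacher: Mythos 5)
Your proposal is correct and follows essentially the same route as the paper: decompose $|\mathcal C_{3,n}'|$ via (\ref{equ3.6}) and the row-reflection symmetry, verify the $n=1$ base case directly, and for $n\ge 2$ expand each $N'(K_3\times P_n;S_n)$ through (\ref{equ3.16}) and sort every pair $(S_n,T_{n-1})$ into the trichotomy of Lemma~\ref{coro3}, with cases (1), (2), (3) producing exactly the $f_{3,n-1}^i$, primed level-$(n-1)$, and $d_n$ terms you describe. Your case assignments (all case (2) for $a_n^1,a_n^2,b_n^1$; the four case-(1) choices for $b_n^3$; the single case-(3) choice $T_{n-1}=\{v_{1,n-1},v_{3,n-1}\}$ for $c_n$) and your observation that the component count being capped at two is what rules out the irreducible configuration all match the paper's argument.
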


\begin{proof}  By (\ref{equ3.6}), we only need to verify the formulas for $a_{n}^{1}, a_{n}^{2}, b_{n}^{1}, b_{n}^{3}, c_{n}$ along with their respective initial values. The case when $n=1$ is obvious. Assume $n\ge 2$ in the following.
 \par

By (\ref{equ3.16}) and Lemma~\ref{coro3} (2), we have
\begin{eqnarray}
a_{n}^{1}&=& N'(K_{3}\times P_{n}; v_{1,n})\nonumber\\
&=&N'(K_{3}\times P_{n};  v_{1,n},v_{1,n-1})+ N'(K_{3}\times P_{n};  v_{1,n},v_{1,n-1}, v_{2,n-1})\nonumber\\
&&+ N'(K_{3}\times P_{n}; v_{1,n},v_{1,n-1}, v_{3,n-1})+ N'(K_{3}\times P_{n}; \{v_{1,n}\}\cup I_{n-1})\nonumber\\
&=&N'(K_{3}\times P_{n-1};  v_{1,n-1})+ N'(K_{3}\times P_{n-1};  v_{1,n-1}, v_{2,n-1})\nonumber\\
&&+ N'(K_{3}\times P_{n-1}; v_{1,n-1}, v_{3,n-1})
+ N'(K_{3}\times P_{n-1}; I_{n-1})\nonumber\\
&=&a_{n-1}^{1}+b_{n-1}^{1}+b_{n-1}^{3}+c_{n-1}.\nonumber
\end{eqnarray}

Similarly, we have
\begin{eqnarray}
a_{n}^{2}&= &N'(K_{3}\times P_{n}; v_{2,n})\nonumber\\
&=&N'(K_{3}\times P_{n-1};  v_{2,n-1})+ N'(K_{3}\times P_{n-1}; v_{2,n-1}, v_{1,n-1})\nonumber\\
&&+ N'(K_{3}\times P_{n-1};  v_{2,n-1}, v_{3,n-1})
+ N'(K_{3}\times P_{n-1}; I_{n-1})\nonumber\\
&=&a_{n-1}^{2}+b_{n-1}^{1}+b_{n-1}^{2}+c_{n-1}\nonumber\\
&=&a_{n-1}^{2}+2b_{n-1}^{1}+c_{n-1},\nonumber
\end{eqnarray}
and
\begin{eqnarray}
b_{n}^{1}&= &N'(K_{3}\times P_{n};  v_{1,n}, v_{2,n})\nonumber\\
&=&N'(K_{3}\times P_{n-1};  v_{1,n-1})
+N'(K_{3}\times P_{n-1}; v_{2,n-1})
+ N'(K_{3}\times P_{n-1}; v_{1,n-1},  v_{2,n-1})\nonumber\\
&&+ N'(K_{3}\times P_{n-1};  v_{1,n-1}, v_{3,n-1})+ N'(K_{3}\times P_{n-1};  v_{2,n-1}, v_{3,n-1})+ N'(K_{3}\times P_{n-1}; I_{n-1})\nonumber\\
&=&a_{n-1}^{1}+a_{n-1}^{2}+b_{n-1}^{1}+b_{n-1}^{3}+b_{n-1}^{2}+c_{n-1}\nonumber\\
&=&a_{n-1}^{1}+a_{n-1}^{2}+2b_{n-1}^{1}+b_{n-1}^{3}+c_{n-1}.\nonumber
\end{eqnarray}

By (\ref{equ3.16}) and Lemma~\ref{coro3} (1), (2), we have
\begin{eqnarray}
b_{n}^{3}&=& N'(K_{3}\times P_{n}; v_{1,n}, v_{3,n} )\nonumber\\
&=&N'(K_{3}\times P_{n}; v_{1,n}, v_{3,n} , v_{1,n-1})+ N'(K_{3}\times P_{n}; v_{1,n}, v_{3,n} , v_{3,n-1})\nonumber\\
&&+N'(K_{3}\times P_{n}; v_{1,n}, v_{3,n} , v_{1,n-1}, v_{2,n-1})
+ N'(K_{3}\times P_{n}; v_{1,n}, v_{3,n}, v_{1,n-1}, v_{3,n-1})\nonumber\\
&&+N' (K_{3}\times P_{n};  v_{1,n}, v_{3,n}, v_{2,n-1}, v_{3,n-1})+N'(K_{3}\times P_{n};  \{v_{1,n}, v_{3,n}\} \cup I_{n-1})\nonumber\\
&=&N (K_{3}\times P_{n-1}; v_{1,n-1})+ N(K_{3}\times P_{n-1}; v_{3,n-1})
+N(K_{3}\times P_{n-1}; v_{1,n-1}, v_{2,n-1})\nonumber\\
&&+ N'(K_{3}\times P_{n-1}; v_{1,n-1}, v_{3,n-1})+N (K_{3}\times P_{n-1};  v_{2,n-1}, v_{3,n-1})+N'(K_{3}\times P_{n-1}; I_{n-1})\nonumber\\
&=&2f_{3,n-1}^{1}+2f_{3,n-1}^{2}+b_{n-1}^{3}+c_{n-1}.\nonumber
\end{eqnarray}

Finally,
by (\ref{equ3.16}) and Lemma~\ref{coro3} (2), (3),  we have
\begin{eqnarray}
c_{n}&=& N'(K_{3}\times P_{n}; I_n)\nonumber\\
&=&N'(K_{3}\times P_{n-1};v_{1,n-1})+ N'(K_{3}\times P_{n-1}; v_{2,n-1})+N'(K_{3}\times P_{n-1}; v_{3,n-1})\nonumber\\
&&+N '(K_{3}\times P_{n-1}; v_{1,n-1}, v_{2,n-1})+
d_n+ N'(K_{3}\times P_{n-1}; v_{2,n-1}, v_{3,n-1})\nonumber\\
&&+N'(K_{3}\times P_{n-1}; I_{n-1})\nonumber\\
&=&a_{n-1}^{1}+a_{n-1}^{2}+a_{n-1}^{3}+b_{n-1}^{1}+d_{n}+b_{n-1}^{2}+c_{n-1}\nonumber\\
&=&2a_{n-1}^{1}+a_{n-1}^{2}+2b_{n-1}^{1}+d_{n}+c_{n-1}.\nonumber
\end{eqnarray}

The proof is complete.
\end{proof}

By Corollary~\ref{coro1} and Lemma~\ref{lem2}, we obtain an enumeration formula for $ N(P_{3}\times P_{n})$, which is in alignment with Theorem~\ref{main}.
\begin{theorem}\label{main3}
For any positive integer $n$, the total number of  connected sets of $P_{3}\times P_{n}$ is
$$ N(P_{3}\times P_{n})=\sum\limits_{k=1}^{n}(N_{3,k}-|\mathcal C_{3,k}'|)\cdot(n-k+1).$$
\end{theorem}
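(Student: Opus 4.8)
The plan is to assemble Theorem~\ref{main3} from the column-span decomposition together with the subtraction identity recorded at the start of Section~3, feeding in the exact value of $|\mathcal{C}_{3,k}'|$ already obtained in Lemma~\ref{lem2}. First I would record the analogue of (\ref{equ2.1}) for the ``bad'' connected sets, i.e.\ the members of $\mathcal{C}(K_3\times P_n)\setminus\mathcal{C}(P_3\times P_n)$. Every connected set of $K_3\times P_n$, and in particular every such bad set, occupies a block of consecutive columns $j,j+1,\dots,j+k-1$, since a connected induced subgraph cannot skip a column. Translating this block to columns $1,\dots,k$ is an isomorphism of the induced subgraph on these columns in $K_3\times P_n$ onto $K_3\times P_k$, and simultaneously of the corresponding induced subgraph in $P_3\times P_n$ onto $P_3\times P_k$; hence the translate of a bad set is a bad set in $K_3\times P_k$ meeting every column, i.e.\ a member of $\mathcal{C}_{3,k}'$, and conversely each member of $\mathcal{C}_{3,k}'$ can be placed to occupy columns $j,\dots,j+k-1$ for exactly $n-k+1$ choices of $j$. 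This bijection yields
\[
|\mathcal{C}(K_3\times P_n)\setminus\mathcal{C}(P_3\times P_n)|=\sum_{k=1}^{n}(n-k+1)\,|\mathcal{C}_{3,k}'|.
\]

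Next I would combine this with Corollary~\ref{coro1}, which through (\ref{equ2.1}) gives $N(K_3\times P_n)=\sum_{k=1}^{n}(n-k+1)N_{3,k}$, and with the identity $N(P_3\times P_n)=N(K_3\times P_n)-|\mathcal{C}(K_3\times P_n)\setminus\mathcal{C}(P_3\times P_n)|$. Subtracting the two column-span expansions term by term gives
\[
N(P_3\times P_n)=\sum_{k=1}^{n}(n-k+1)\bigl(N_{3,k}-|\mathcal{C}_{3,k}'|\bigr),
\]
which is precisely the asserted formula. Here the numbers $N_{3,k}$ are read off from $\mathbf{u_{1}}\cdot T_{1}^{k-1}\mathbf{1}$ in Corollary~\ref{coro1}, and the numbers $|\mathcal{C}_{3,k}'|$ from the closed recurrence system of Lemma~\ref{lem2} (together with the auxiliary sequence $d_n$ of Lemma~\ref{coro3}).

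The substantive work behind this theorem has in fact already been carried out in the preceding lemmas, so the remaining obstacle is comparatively mild. What still must be verified is exactly the column-span decomposition for the bad sets: that a connected set occupies consecutive columns, and that the column-translation is an isomorphism preserving both $K_3$-adjacency and $P_3$-adjacency, so that the property ``connected in $K_3\times P_\bullet$ but not in $P_3\times P_\bullet$'' is translation-invariant and the decomposition counts each bad set exactly once. The genuinely hard input---that the lower bound (\ref{equ3.12}) is attained when $m=3$, so that the recurrence of Lemma~\ref{lem2} computes $|\mathcal{C}_{3,k}'|$ itself rather than merely the bound $N'_{3,k}$---is supplied by Lemma~\ref{coro3}, whose proof shows the excluded case never arises for $m=3$. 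Once that sharpness is granted, Theorem~\ref{main3} follows from the bookkeeping above and realizes the equality case of Theorem~\ref{main} for $m=3$.
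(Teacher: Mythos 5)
Your proposal is correct and follows essentially the same route as the paper: the paper derives Theorem~\ref{main3} directly from Corollary~\ref{coro1} and Lemma~\ref{lem2} via the subtraction identity and the column-span decomposition, with the sharpness of (\ref{equ3.12}) for $m=3$ supplied by Lemma~\ref{coro3}. You merely make explicit the translation-invariance argument behind the expansion $|\mathcal{C}(K_3\times P_n)\setminus\mathcal{C}(P_3\times P_n)|=\sum_{k=1}^{n}(n-k+1)\,|\mathcal{C}_{3,k}'|$, which the paper leaves implicit.
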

\noindent\textbf{Remark.} See some detailed calculation results below, where $N_{m,n}^{*}$ denote the number of connected sets of $P_{m}\times P_{n}$ that contain at least one vertex in each column.\par
(1) When $n=1$, $|\mathcal C_{3,1}'|=1$, $N_{3,1}^{*}=N(P_{3})=6=N_{3,1}-|\mathcal C_{3,1}'|$.\par
(2) For $n=2$, $n=3$ and $n=4$, \par
~~~~(i) $a_{2}^{1}=1$,  $a_{2}^{2}=0$, $b_{2}^{1}=1$, $b_{2}^{3}=5$, $c_{2}=0$, $|\mathcal C_{3,2}'|=9$, $N_{3,2}^{*}=28$;\par
~~~~(ii) $a_{3}^{1}=7$,  $a_{3}^{2}=2$, $b_{3}^{1}=8$, $b_{3}^{3}=25$, $c_{3}=4$, $|\mathcal C_{3,3}'|=61$, $N_{3,3}^{*}=144$;\par
~~~~(iii) $a_{4}^{1}=44$,  $a_{4}^{2}=22$, $b_{4}^{1}=54$, $b_{4}^{3}=141$, $c_{4}=40$, $|\mathcal C_{3,4}'|=399$, $N_{3,4}^{*}=730$.\par
Using Mathematica, we calculate $N_{3,3}^{*}$ and $N_{3,4}^{*}$, and the results are consistent with those obtained from the  formulas.\par

\subsection{Calculating $N(P_{4}\times P_{n})$}

In this subsection, we derive a formula for $N(P_{4}\times P_{n})$ by applying a method analogous to that used for $N(P_{3}\times P_{n})$.

For any positive integer $n$, let
\begin{itemize}
  \item $a_{n}^{i}=N'(K_{4}\times P_{n};  v_{i,n})$ for $i=1,2,3,4$;
  \item $b_{n}^{i}=N'(K_{4}\times P_{n}; v_{i,n}, v_{i+1,n})$ for $i=1,2,3$, \\
      $b_{n}^{4}=N'(K_{4}\times P_{n};  v_{1,n}, v_{3,n})$,\\
      $b_{n}^{5}=N'(K_{4}\times  P_{n}; v_{2,n},  v_{4,n})$,\\ $b_{n}^{6}=N'(K_{4}\times  P_{n};   v_{1,n}, v_{4,n})$;
  \item $c_{n}^{i}=N'(K_{4}\times  P_{n}; v_{i,n}, v_{i+1,n}, v_{i+2,n})$ for $i=1,2$,\\
  $c_{n}^{3}=N'(K_{4}\times  P_{n};  v_{1,n}, v_{2,n}, v_{4,n})$,\\
  $c_{n}^{4}=N'(K_{4}\times  P_{n}; v_{1,n}, v_{3,n}, v_{4,n})$;
  \item $g_{n}=N'(K_{4}\times  P_{n}; I_n)$.
\end{itemize}
Referring to Figure~\ref{fig4}, it is clear to see that
$$ a_{n}^{1}=a_{n}^{4},~a_{n}^{2}=a_{n}^{3},~ b_{n}^{1}=b_{n}^{3},~ b_{n}^{4}=b_{n}^{5}, ~c_{n}^{1}=c_{n}^{2},~ c_{n}^{3}=c_{n}^{4}.
$$

Then we have the following lemma.

\begin{figure}[h]
\centering
\scalebox{0.65}[0.65]{\includegraphics{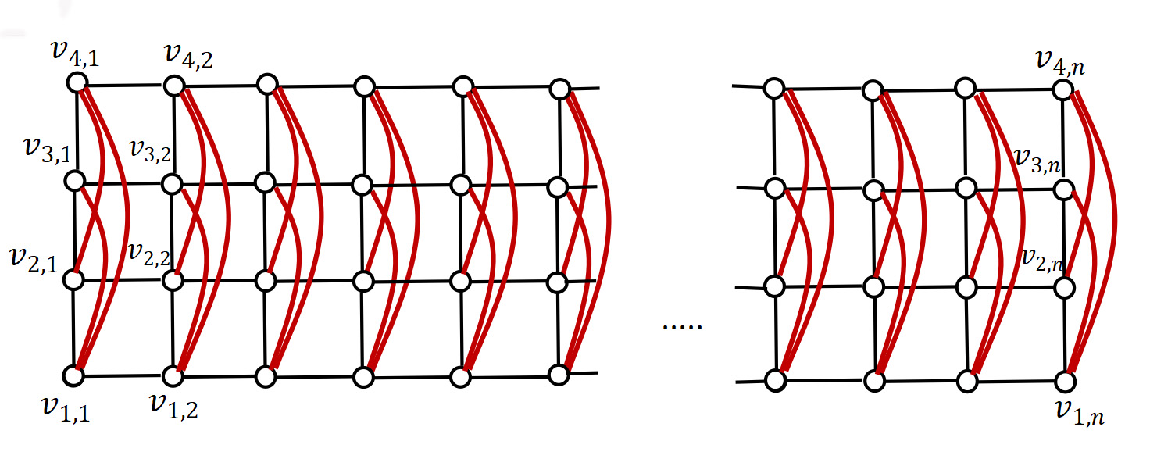}}
\caption{The graph $K_{4}\times P_{n}$ }
\label{fig4}
\end{figure}

\begin{lemma}\label{coro4}
For any $n\ge 2$ and $ C \in \mathcal C_{4,n}'$ with $C\cap I_{n}= S_{n}$ and $C\cap I_{n-1}= T_{n-1}$,\\
(1) if $c(P_{4}\times P_{n}[S_{n}\cup T_{n-1}])> c(P_{4}\times P_{n-1}[ T_{n-1}])$, then $N'(K_{4}\times P_{n}; S_{n}\cup T_{n-1} )=N(K_{4}\times P_{n-1}; T_{n-1} )$;\\
(2) if
$c(P_{4}\times P_{n}[S_{n}\cup T_{n-1}])= c(P_{4}\times P_{n-1}[ T_{n-1}])$,  then $N'(K_{4}\times P_{n}; S_{n}\cup T_{n-1} )=N'(K_{4}\times P_{n-1}; T_{n-1} )$;\\
(3) otherwise,
$c(P_{4}\times P_{n}[S_{n}\cup T_{n-1}])< c(P_{4}\times P_{n-1}[T_{n-1}])$, and all such pairs $(S_n,T_{n-1})$ are listed in Table 1. Let $r_n,t_n,k_n$ be as defined in Table 1. Then
 $r_2=t_2=k_2=0,$ and when $n\ge 3$,
 \begin{align}\left\{\begin{aligned}
r_{n}&=a_{n-2}^{1}+a_{n-2}^{3}+2b_{n-2}^{1}+b_{n-2}^{2}+b_{n-2}^{6}+r_{n-1}+2c_{n-2}^{1}+c_{n-2}^{3}+t_{n-1}+g_{n-2},\\
t_{n}&=2a_{n-2}^{1}+a_{n-2}^{3}+2b_{n-2}^{1}+b_{n-2}^{2}+b_{n-2}^{5}+r_{n-1}+k_{n-1}+2c_{n-2}^{1}+2 t_{n-1}+g_{n-2},\\
k_{n}&=2a_{n-2}^{1}+2b_{n-2}^{1}+2b_{n-2}^{4}+k_{n-1}+2c_{n-2}^{1}+2 t_{n-1}+g_{n-2}.\nonumber
 \end{aligned}\right.
 \end{align}

	\begin{table}[!h]\centering
	\begin{tabular}{|c|c|c|}
	\hline
$S_n$ & $T_{n-1}$ & $N'(K_{4}\times P_{n}; S_{n}\cup T_{n-1} )$ \\
	\hline
$\{v_{1,n},v_{2,n},v_{3,n}\}$ & $\{v_{1,n-1},v_{3,n-1}\}$ & $r_n$\\
	\hline
$\{v_{1,n},v_{2,n},v_{3,n}\}$ & $\{v_{1,n-1},v_{3,n-1},v_{4,n-1}\}$ & $t_n$ \\
	\hline
$\{v_{2,n},v_{3,n},v_{4,n}\}$ & $\{v_{2,n-1},v_{4,n-1}\}$ & $r_n$ \\
	\hline
$\{v_{2,n},v_{3,n},v_{4,n}\}$ & $\{v_{1,n-1},v_{2,n-1},v_{4,n-1}\}$ & $t_n$ \\
	\hline
$I_n$ & $\{v_{1,n-1},v_{3,n-1}\}$ & $r_n$ \\
	\hline
$I_n$ & $\{v_{1,n-1},v_{4,n-1}\}$ & $k_n$ \\
	\hline
$I_n$ & $\{v_{2,n-1},v_{4,n-1}\}$ & $r_n$ \\	
\hline
$I_n$ & $\{v_{1,n-1},v_{2,n-1},v_{4,n-1}\}$ & $t_n$ \\
	\hline
$I_n$ & $\{v_{1,n-1},v_{3,n-1},v_{4,n-1}\}$ & $t_n$ \\
	 \hline
	 \end{tabular}
	 \caption{All $(S_{n},T_{n-1})$ pairs for Lemma~\ref{coro4} (3)}
		\end{table}
\end{lemma}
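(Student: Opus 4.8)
The plan is to mirror the proof of Lemma~\ref{coro3}, using throughout that the subgraph induced on any column is a path $P_4$, so that $1\le c(P_4\times P_k[X])\le 2$ for every nonempty $X\subseteq I_k$. First I would record a structural fact special to $m=4$: no run of $S_n$ can stay isolated from $T_{n-1}$ while another run of $S_n$ simultaneously bridges the two runs of $T_{n-1}$. Checking the five two-run subsets of a column, namely $\{v_{1,n-1},v_{3,n-1}\}$, $\{v_{1,n-1},v_{4,n-1}\}$, $\{v_{2,n-1},v_{4,n-1}\}$, $\{v_{1,n-1},v_{3,n-1},v_{4,n-1}\}$ and $\{v_{1,n-1},v_{2,n-1},v_{4,n-1}\}$, shows that any bridging run must occupy every row strictly between the two runs, leaving no free row for a second, isolated run. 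Hence a component of $P_4\times P_n[S_n]$ survives in $P_4\times P_n[S_n\cup T_{n-1}]$ exactly when $c(P_4\times P_n[S_n\cup T_{n-1}])>c(P_4\times P_{n-1}[T_{n-1}])$, and no component survives with equal component number exactly when the two counts agree. With this dictionary, statements (1) and (2) are immediate restatements of Lemma~\ref{lem1}(1) and (2).

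For (3), since $c(T_{n-1})\le 2$ and $c(S_n\cup T_{n-1})\ge 1$, a strict drop forces $c(P_4\times P_{n-1}[T_{n-1}])=2$ and $c(P_4\times P_n[S_n\cup T_{n-1}])=1$; that is, $T_{n-1}$ splits into two runs and $S_n$ fuses them into one connected blob while meeting each run. Enumerating, for each of the five two-run sets $T_{n-1}$, the connected $S_n$ that covers the gap and touches every run produces exactly the nine pairs listed in Table 1. The key observation is that once the blob $P_4\times P_n[S_n\cup T_{n-1}]$ is connected, the expansion (\ref{equ3.11}) and the sub-rules (3.1)--(3.3) see $S_n$ only through this single component and through the occupied rows of column $n-1$; therefore $N'(K_4\times P_n;S_n\cup T_{n-1})$ depends on $T_{n-1}$ alone. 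The row-flip symmetry $i\mapsto 5-i$ of $K_4\times P_n$ then identifies $\{v_{1,n-1},v_{3,n-1}\}$ with $\{v_{2,n-1},v_{4,n-1}\}$ and $\{v_{1,n-1},v_{3,n-1},v_{4,n-1}\}$ with $\{v_{1,n-1},v_{2,n-1},v_{4,n-1}\}$ and fixes $\{v_{1,n-1},v_{4,n-1}\}$, collapsing the three flip-orbits of two-run sets to the three symbols $r_n$, $t_n$, $k_n$.

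To obtain the recurrences I would take the representatives $(I_n,\{v_{1,n-1},v_{3,n-1}\})$, $(I_n,\{v_{1,n-1},v_{3,n-1},v_{4,n-1}\})$ and $(I_n,\{v_{1,n-1},v_{4,n-1}\})$ and expand each by (\ref{equ3.11}) over the $X\subseteq I_{n-2}$ with $X\cap T_{n-2}\ne\emptyset$. Two simplifications tame the sum: condition (i) never holds, since the connected blob always meets such an $X$, so (3.1) contributes nothing; and because every $X$ has at most two components the interval hypothesis in condition (iii) is vacuous, so the obstruction of Figure~\ref{fig1} cannot arise for $m=4$. It then remains to sort the admissible $X$ by whether exactly one run of $X$ meets $T_{n-2}$ (condition (ii), giving the level-$(n-2)$ terms $a^{i}_{n-2},b^{i}_{n-2},c^{i}_{n-2},g_{n-2}$ via $N'(K_4\times P_{n-2};X)$) or two runs of $X$ meet $T_{n-2}$ (condition (iii), giving level-$(n-1)$ terms that, after relabelling column $n-1$ as the new last column, become $r_{n-1}$, $t_{n-1}$ or $k_{n-1}$). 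Reading off each contribution yields the three displayed recurrences and simultaneously shows that (\ref{equ3.12}) is an equality when $m=4$. For the base case, a connected blob confined to columns $1$ and $2$ is itself a connected set of $P_4\times P_2$ and so lies outside $\mathcal C_{4,2}'$, which gives $r_2=t_2=k_2=0$.

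The substantive points, rather than the final bookkeeping, are the two structural claims: that for $m=4$ a bridging run forbids an isolated run, which is what makes the component-count conditions equivalent to those of Lemma~\ref{lem1}; and that $N'(K_4\times P_n;S_n\cup T_{n-1})$ is independent of the particular connected $S_n$, which is what allows the nine pairs of Table 1 to collapse to three symbols. The main labour, and the likeliest source of error, is the exhaustive classification of the (up to twelve) sets $X$ for each of $r_n$, $t_n$, $k_n$, where each $X$ must be assigned to condition (ii) or (iii) and its exact contribution identified; I would organize this as a short table per symbol to keep the accounting honest.
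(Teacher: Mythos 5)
Your proposal follows essentially the same route as the paper: parts (1) and (2) are reduced to Lemma~\ref{lem1}(1),(2) via the observation that $1\le c(X)\le 2$ on a column, part (3) is handled by enumerating the nine $(S_n,T_{n-1})$ pairs and expanding the three representatives over $X\subseteq I_{n-2}$ with $X\cap T_{n-2}\neq\emptyset$ using (\ref{equ3.11}) together with Lemma~\ref{lem1}(3.2),(3.3), and your structural observations (no isolated run can coexist with a bridging run when $m=4$; $N'(K_4\times P_n;S_n\cup T_{n-1})$ depends only on $T_{n-1}$ once the blob is connected; conditions (i) and the interval pathology of (iii) cannot occur, so (\ref{equ3.12}) is an equality) are all correct and in fact spelled out more explicitly than in the paper. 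The only piece you defer is the term-by-term classification of the twelve admissible $X$ for each of $r_n$, $t_n$, $k_n$, which is precisely the computation the paper's proof consists of and is where the displayed coefficients come from, so the proposal is a sound plan rather than a finished verification of those three formulas.
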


\begin{proof}
Since $1\le c(X)\le 2$ holds for any $\emptyset\neq X\subseteq I_k$ with $k=1,2,\dots,n$, (1) and (2) can be easily verified according to Lemma~\ref{lem1} (1) and (2).

For (3), it is clear to see that all the possible pairs of $(S_n,T_{n-1})$  are listed in Table 1, and as shown in  Table 1, some pairs of $(S_n,T_{n-1})$ have the same value of $N'(K_{4}\times P_{n}; S_{n}\cup T_{n-1} )$.
Then it remains to show the recurrence formulas of $r_n,t_n,k_n$.

Clearly, $r_{2}=t_{2}=k_{2}=0$.
Assume $n\ge 3$ in the following.

By (\ref{equ3.11}), Lemma~\ref{lem1} (3.2) and (3.3),
\begin{eqnarray}
r_{n}&=&N'(K_{4}\times  P_{n}; v_{1,n}, v_{2,n}, v_{3,n},v_{1,n-1}, v_{3,n-1})\nonumber\\
&=&a_{n-2}^{1}+a_{n-2}^{3}+b_{n-2}^{1}+r_{n-1}+b_{n-2}^{6}+b_{n-2}^{2}+b_{n-2}^3+c_{n-2}^{1}+c_{n-2}^{3}+t_{n-1}\nonumber\\
&&+c_{n-2}^{2}+g_{n-2}\nonumber\\
&=&a_{n-2}^{1}+a_{n-2}^{3}+2b_{n-2}^{1}+b_{n-2}^{2}+b_{n-2}^{6}+r_{n-1}+2c_{n-2}^{1}+c_{n-2}^{3}+t_{n-1}+g_{n-2},\nonumber
\end{eqnarray}
where the second equality holds due to Lemma~\ref{lem1} (3.3) that
 \begin{align}\left\{\begin{aligned}
&N'(K_{4}\times  P_{n}; v_{1,n}, v_{2,n}, v_{3,n}, v_{1,n-1}, v_{3,n-1}, v_{1,n-2}, v_{3,n-2})=r_{n-1},\\
&N'(K_{4}\times  P_{n}; v_{1,n}, v_{2,n}, v_{3,n}, v_{1,n-1}, v_{3,n-1},  v_{1,n-2}, v_{3,n-2},v_{4,n-2})=t_{n-1}.\nonumber
 \end{aligned}\right.
 \end{align}

Similarly,
\begin{eqnarray}
t_{n}&=&N'(K_{4}\times  P_{n}; v_{1,n}, v_{2,n}, v_{3,n},v_{1,n-1}, v_{3,n-1}, v_{4,n-1})\nonumber\\
&=&a_{n-2}^{1}+a_{n-2}^{3}+a_{n-2}^{4}+b_{n-2}^{1}+b_{n-2}^{2}+b_{n-2}^{3}+b_{n-2}^{5}+r_{n-1}+k_{n-1}+c_{n-2}^{1}\nonumber\\
&&+c_{n-2}^{2}+2 t_{n-1}+g_{n-2}\nonumber\\
&=&2a_{n-2}^{1}+a_{n-2}^{3}+2b_{n-2}^{1}+b_{n-2}^{2}+b_{n-2}^{5}+r_{n-1}+k_{n-1}+2c_{n-2}^{1}+2 t_{n-1}+g_{n-2},\nonumber
\end{eqnarray}
where the second equality holds as
 \begin{align}\left\{\begin{aligned}
&N'(K_{4}\times  P_{n}; v_{1,n}, v_{2,n}, v_{3,n}, v_{1,n-1}, v_{3,n-1},v_{4,n-1}, v_{1,n-2}, v_{3,n-2})=r_{n-1},\\
&N'(K_{4}\times  P_{n}; v_{1,n}, v_{2,n}, v_{3,n}, v_{1,n-1}, v_{3,n-1}, v_{4,n-1}, v_{1,n-2}, v_{4,n-2})=k_{n-1},\\
&N'(K_{4}\times  P_{n}; v_{1,n}, v_{2,n}, v_{3,n}, v_{1,n-1}, v_{3,n-1}, v_{4,n-1}, v_{1,n-2}, v_{2,n-2},v_{4,n-2})=t_{n-1},\\
&N'(K_{4}\times  P_{n}; v_{1,n}, v_{2,n}, v_{3,n}, v_{1,n-1}, v_{3,n-1},v_{4,n-1}, v_{1,n-2}, v_{3,n-2},v_{4,n-2})=t_{n-1}.\nonumber
 \end{aligned}\right.
 \end{align}

Finally,
\begin{eqnarray}
k_{n}&=&N'(K_{4}\times  P_{n}; I_{n}\cup\{ v_{1,n-1}, v_{4,n-1}\})\nonumber\\
&=&a_{n-2}^{1}+a_{n-2}^{4}+b_{n-2}^{1}+b_{n-2}^{3}+b_{n-2}^{4}+b_{n-2}^{5}+k_{n-1}\nonumber\\
&&+c_{n-2}^{1}+c_{n-2}^{2}+2 t_{n-1}+g_{n-2}\nonumber\\
&=&2a_{n-2}^{1}+2b_{n-2}^{1}+2b_{n-2}^{4}+k_{n-1}+2c_{n-2}^{1}+2 t_{n-1}+g_{n-2},\nonumber
\end{eqnarray}
where the second equality holds as
 \begin{align}\left\{\begin{aligned}
&N'(K_{4}\times  P_{n}; I_n\cup\{ v_{1,n-1}, v_{4,n-1}, v_{1,n-2}, v_{4,n-2}\})=k_{n-1},\\
&N'(K_{4}\times  P_{n}; I_n\cup\{ v_{1,n-1}, v_{4,n-1}, v_{1,n-2}, v_{2,n-2},v_{4,n-2}\})=t_{n-1},\\
&N'(K_{4}\times  P_{n}; I_n\cup\{ v_{1,n-1}, v_{4,n-1}, v_{1,n-2}, v_{3,n-2}, v_{4,n-2}\})=t_{n-1}.\nonumber
 \end{aligned}\right.
 \end{align}

The proof is complete and it is clear that (\ref{equ3.12}) is tight when $m=4$.
\end{proof}

Since the lower bound $N'_{m,n}$ is also sharp when $m=4$, we are able to determine the exact value of $|\mathcal C_{4,n}'|$ by recurrence formulas.
\begin{lemma}\label{lem3}
For any positive integer $n$,
$$|\mathcal C_{4,n}'|=2 a_{n}^{1}+2 a_{n}^{2}+2 b_{n}^{1}+b_{n}^{2}+2 b_{n}^{4}+b_{n}^{6}+2 c_{n}^{1}+2 c_{n}^{3}+g_{n},$$
where
 $a_{1}^{1}=a_{1}^{2}=b_{1}^{1}=b_{1}^{2}=c_{1}^{1}=g_{1}=0,~b_{1}^{4}=b_{1}^{6}=c_{1}^{3}=1$, and when $n\ge 2$,
 \begin{align}\left\{\begin{aligned}
a_{n}^{1}&=a_{n-1}^{1}+b_{n-1}^{1}+b_{n-1}^{4}+b_{n-1}^{6}+c_{n-1}^{1}+2c_{n-1}^{3}+g_{n-1},\\
a_{n}^{2}&=a_{n-1}^{2}+b_{n-1}^{1}+b_{n-1}^{2}+b_{n-1}^{4}+2c_{n-1}^{1}+c_{n-1}^{3}+g_{n-1},\\
b_{n}^{1}&=a_{n-1}^{1}+a_{n-1}^{2}+b_{n-1}^{1}+b_{n-1}^{2}+2b_{n-1}^{4}+b_{n-1}^{6}+2c_{n-1}^{1}+2c_{n-1}^{3}+g_{n-1},\\
b_{n}^{2}&=2a_{n-1}^{2}+2b_{n-1}^{1}+b_{n-1}^{2}+2b_{n-1}^{4}+2c_{n-1}^{1}+2c_{n-1}^{3}+g_{n-1},\\
b_{n}^{4}&=2f_{4,n-1}^{1}+4f_{4,n-1}^{2}+b_{n-1}^{4}+2f_{4,n-1}^{3}+c_{n-1}^{1}+c_{n-1}^{3}+g_{n-1},\\
b_{n}^{6}&=2f_{4,n-1}^{1}+4f_{4,n-1}^{2}+b_{n-1}^{6}+2f_{4,n-1}^{3}+2c_{n-1}^{3}+g_{n-1},\\
c_{n}^{1}&=a_{n-1}^{1}+2a_{n-1}^{2}+2b_{n-1}^{1}+b_{n-1}^{2}+b_{n-1}^{4}+b_{n-1}^{6}+r_{n}+2c_{n-1}^{1}+c_{n-1}^{3}+t_{n}+g_{n-1},\\
c_{n}^{3}&=3f_{4,n-1}^{1}+4f_{4,n-1}^{2}+b_{n-1}^{4}+b_{n-1}^{6}+f_{4,n-1}^{3}+c_{n-1}^{1}+2c_{n-1}^{3}+g_{n-1},\\
g_{n}&=2a_{n-1}^{1}+2a_{n-1}^{2}+2b_{n-1}^{1}+b_{n-1}^{2}+2  r_{n}+k_{n}+2c_{n-1}^{1}+2 t_{n}+g_{n-1}.\nonumber
 \end{aligned}\right.
 \end{align}
\end{lemma}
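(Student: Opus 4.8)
The plan is to carry out, for $m=4$, the same scheme that proves Lemma~\ref{lem2} for $m=3$. The backbone is the column decomposition (\ref{equ3.6}), which writes $|\mathcal C_{4,n}'|$ as the sum of $N'(K_4\times P_n;S_n)$ over all nonempty $S_n\subseteq I_n$; the single-step expansion (\ref{equ3.16}); and the three reduction rules of Lemma~\ref{coro4}. To obtain the closed form for $|\mathcal C_{4,n}'|$, I would group the $2^4-1=15$ nonempty subsets of $I_n$ into the symmetry classes recorded before the lemma. The four singletons give $a_n^1+a_n^2+a_n^3+a_n^4=2a_n^1+2a_n^2$; the three $P_4$-adjacent pairs give $2b_n^1+b_n^2$ and the three non-adjacent pairs give $2b_n^4+b_n^6$; the four triples give $2c_n^1+2c_n^3$; and the whole column gives $g_n$. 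Summing these yields the stated expression, and all the identities $a_n^1=a_n^4$, $b_n^1=b_n^3$, and so on follow from the reflection automorphism sending $v_{i,j}$ to $v_{5-i,j}$, which preserves both $K_4\times P_n$ and $P_4\times P_n$.

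For the base case $n=1$, note that $K_4\times P_1\cong K_4$ and $P_4\times P_1\cong P_4$, so a set lies in $\mathcal C_{4,1}'$ exactly when it is a nonempty subset of $V(P_4)$ that fails to be a subpath of $P_4$. Inspecting the patterns shows that the singletons, the adjacent pairs $\{v_1,v_2\},\{v_2,v_3\}$, the interval triple $\{v_1,v_2,v_3\}$, and the whole column are all connected in $P_4$, giving $a_1^i=b_1^1=b_1^2=c_1^1=g_1=0$, while $\{v_1,v_3\}$, $\{v_1,v_4\}$, and $\{v_1,v_2,v_4\}$ are disconnected, giving $b_1^4=b_1^6=c_1^3=1$; this matches the stated initial data.

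The heart of the argument is the derivation of the nine recurrences for $n\ge 2$. For each representative pattern $S_n$ I would use (\ref{equ3.16}) to expand $N'(K_4\times P_n;S_n)$ as a sum over the subsets $T_{n-1}=X\subseteq I_{n-1}$ meeting the shadow $S_{n-1}$, and then classify every summand by Lemma~\ref{coro4}: case (1) replaces it by the unrestricted count $N(K_4\times P_{n-1};T_{n-1})=f_{4,n-1}^{|T_{n-1}|}$; case (2) replaces it by the restricted count $N'(K_4\times P_{n-1};T_{n-1})$, which is one of $a_{n-1}^{\bullet},b_{n-1}^{\bullet},c_{n-1}^{\bullet},g_{n-1}$; and case (3) contributes one of the auxiliary quantities $r_n,t_n,k_n$ read off from Table~1. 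Collapsing the resulting terms through the symmetry identities produces each displayed recurrence. In particular, $r_n,t_n,k_n$ enter only the recurrences for $c_n^1$ and $g_n$, precisely because $\{v_1,v_2,v_3\}$ and $I_n$ are the only $S_n$-patterns occurring in Table~1.

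The main obstacle is the bookkeeping in this last step. For the larger patterns, above all $S_n=I_n$ and the triples, one must enumerate every admissible $T_{n-1}$, decide correctly which of the three mutually exclusive cases of Lemma~\ref{coro4} applies by comparing $c(P_4\times P_n[S_n\cup T_{n-1}])$ with $c(P_4\times P_{n-1}[T_{n-1}])$, and then apply the reflection symmetry consistently so that equal contributions are merged. The most delicate cases are those governed by case (3), where one must read off from Table~1 which merging of components occurs in order to distinguish among $r_n$, $t_n$, and $k_n$; a single miscounted pattern would corrupt the entire linear system. Once all nine recurrences are verified in this way, combining them with the closed form from the first step completes the proof.
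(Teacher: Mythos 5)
Your proposal follows essentially the same route as the paper's proof: the closed form for $|\mathcal C_{4,n}'|$ comes from (\ref{equ3.6}) by grouping the fifteen nonempty subsets of $I_n$ into reflection-symmetry classes, the $n=1$ case is checked by listing the non-intervals of $P_4$, and each recurrence for $n\ge 2$ is obtained by expanding via (\ref{equ3.16}) and classifying every $T_{n-1}$ by the three cases of Lemma~\ref{coro4}, with $r_n,t_n,k_n$ entering exactly through the $S_n$-patterns of Table~1 (i.e., only in $c_n^1$ and $g_n$). The remaining work is the term-by-term bookkeeping you correctly identify, which is precisely what the paper carries out for the representative cases $a_n^1$, $b_n^4$, $c_n^1$, $g_n$.
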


\begin{proof}
By (\ref{equ3.6}), we need only to verify the formulas for $a_{n}^{1},~ a_{n}^{2}, ~b_{n}^{1},~ b_{n}^{2}, ~b_{n}^{4}, ~b_{n}^{6}, ~c_{n}^{1}, ~c_{n}^{3}, ~g_{n}$ along with their respective initial values. The case when $n=1$ is obvious. Now assume $n\ge 2$.

By (\ref{equ3.16}) and Lemma~\ref{coro4} (2), we have
\begin{eqnarray}
a_{n}^{1}&=&N'(K_{4}\times  P_{n};  v_{1,n})\nonumber\\
&=&N'(K_{4}\times  P_{n-1};  v_{1,n-1})+ N'(K_{4}\times  P_{n-1}; v_{1,n-1}, v_{2,n-1})+N'(K_{4}\times  P_{n-1};v_{1,n-1}, v_{3,n-1})\nonumber\\
&&+ N'(K_{4}\times  P_{n-1};  v_{1,n-1}, v_{4,n-1})+N'(K_{4}\times  P_{n-1};  v_{1,n-1}, v_{2,n-1} , v_{3,n-1})\nonumber\\
&&+ N'(K_{4}\times  P_{n-1};  v_{1,n-1}, v_{2,n-1} , v_{4,n-1})+ N'(K_{4}\times  P_{n-1}; v_{1,n-1}, v_{3,n-1} , v_{4,n-1})\nonumber\\
&&+ N'(K_{4}\times  P_{n-1}; I_{n-1})\nonumber\\
&=&a_{n-1}^{1}+b_{n-1}^{1}+b_{n-1}^{4}+b_{n-1}^{6}+c_{n-1}^{1}+c_{n-1}^{3}+c_{n-1}^{4}+g_{n-1}\nonumber\\
&=&a_{n-1}^{1}+b_{n-1}^{1}+b_{n-1}^{4}+b_{n-1}^{6}+c_{n-1}^{1}+2c_{n-1}^{3}+g_{n-1},\nonumber
\end{eqnarray}
and the recurrence formulas for $a_{n}^{2}, ~b_{n}^{1}, ~b_{n}^{2}$ can be derived in a similar manner.

By (\ref{equ3.16}) and Lemma~\ref{coro4} (1), (2),
\begin{eqnarray}
b_{n}^{4}&=&N'(K_{4}\times  P_{n}; v_{1,n}, v_{3,n})\nonumber\\
&=&N(K_{4}\times  P_{n-1}; v_{1,n-1})+N(K_{4}\times  P_{n-1};  v_{3,n-1})\nonumber\\
&&+N(K_{4}\times  P_{n-1}; v_{1,n-1},v_{2,n-1})+N'(K_{4}\times  P_{n-1};  v_{1,n-1},v_{3,n-1})\nonumber\\
&&+N(K_{4}\times  P_{n-1}; v_{1,n-1},v_{4,n-1})+N(K_{4}\times  P_{n-1}; v_{2,n-1},v_{3,n-1})\nonumber\\
&&+N(K_{4}\times  P_{n-1};  v_{3,n-1},v_{4,n-1})+N'(K_{4}\times  P_{n-1};  v_{1,n-1},v_{2,n-1},v_{3,n-1})\nonumber\\
&&+N(K_{4}\times  P_{n-1};  v_{1,n-1},v_{2,n-1},v_{4,n-1})+N'(K_{4}\times  P_{n-1};  v_{1,n-1},v_{3,n-1},v_{4,n-1})\nonumber\\
&&+N(K_{4}\times  P_{n-1}; v_{2,n-1},v_{3,n-1},v_{4,n-1})+N'(K_{4}\times  P_{n-1}; I_{n-1})\nonumber\\
&=& 2f_{4,n-1}^{1}+f_{4,n-1}^{2}+b_{n-1}^{4}+3f_{4,n-1}^{2}
+c_{n-1}^{1}+f_{4,n-1}^3+c_{n-1}^{4}+f_{4,n-1}^3+g_{n-1}\nonumber\\
&=&2f_{4,n-1}^{1}+4f_{4,n-1}^{2}+b_{n-1}^{4}+2f_{4,n-1}^{3}+c_{n-1}^{1}+c_{n-1}^{3}+g_{n-1},\nonumber
\end{eqnarray}
and the recurrence formulas for $b_{n}^{6}$ and $c_{n}^{3}$ can be derived in a similar manner.\par

Finally, by (\ref{equ3.16}) and Lemma~\ref{coro4} (2), (3),
\begin{eqnarray}
c_{n}^{1}&=&N'(K_{4}\times  P_{n}; v_{1,n},v_{2,n}, v_{3,n})\nonumber\\
&=&N'(K_{4}\times  P_{n-1}; v_{1,n-1})+N'(K_{4}\times  P_{n-1}; v_{2,n-1})+N'(K_{4}\times  P_{n-1}; v_{3,n-1})\nonumber\\
&&+N'(K_{4}\times  P_{n-1};v_{1,n-1},v_{2,n-1})+r_n+N'(K_{4}\times  P_{n-1}; v_{1,n-1},v_{4,n-1})\nonumber\\
&&+N'(K_{4}\times  P_{n-1}; v_{2,n-1},v_{3,n-1})
+N'(K_{4}\times  P_{n-1}; v_{2,n-1},v_{4,n-1})\nonumber\\
&&+N'(K_{4}\times  P_{n-1}; v_{3,n-1},v_{4,n-1})+N'(K_{4}\times  P_{n-1}; v_{1,n-1},v_{2,n-1},v_{3,n-1})\nonumber\\
&&+N'(K_{4}\times  P_{n-1}; v_{1,n-1},v_{2,n-1},v_{4,n-1})+t_n\nonumber\\
&&+
N'(K_{4}\times  P_{n-1}; v_{2,n-1},v_{3,n-1},v_{4,n-1})+N'(K_{4}\times  P_{n-1}; I_{n-1})\nonumber\\
&=&a_{n-1}^{1}+a_{n-1}^{2}+a_{n-1}^{3}+b_{n-1}^{1}+r_n+b_{n-1}^{6}+b_{n-1}^{2}+b_{n-1}^{5}+b_{n-1}^{3}
+c_{n-1}^{1}+c_{n-1}^{3}\nonumber\\
&&+t_n+c_{n-1}^{2}+g_{n-1}\nonumber\\
&=&a_{n-1}^{1}+2a_{n-1}^{2}+2b_{n-1}^{1}+b_{n-1}^{2}+b_{n-1}^{5}+b_{n-1}^{6}+r_{n}+2c_{n-1}^{1}+c_{n-1}^{3}+t_{n}+g_{n-1},\nonumber
\end{eqnarray}
and
\begin{eqnarray}
g_{n}&=&N'(K_{4}\times  P_{n}; I_{n})\nonumber\\
&=&a_{n-1}^{1}+a_{n-1}^{2}+a_{n-1}^{3}+a_{n-1}^{4}+b_{n-1}^{1}+b_{n-1}^{2}+b_{n-1}^{3}+2  r_{n}+k_{n}\nonumber\\
&&+c_{n-1}^{1}+c_{n-1}^{2}+2 t_{n}+g_{n-1}\nonumber\\
&=&2a_{n-1}^{1}+2a_{n-1}^{2}+2b_{n-1}^{1}+b_{n-1}^{2}+2  r_{n}+k_{n}+2c_{n-1}^{1}+2 t_{n}+g_{n-1}.\nonumber
\end{eqnarray}

The proof is complete.
\end{proof}

By Corollary~\ref{coro2} and Lemma~\ref{lem3}, we obtain an enumeration formula for $ N(P_{4}\times P_{n})$, which  is in alignment with Theorem~\ref{main}.

\begin{theorem}\label{main4}
For any positive integer $n$, the total number of   connected sets of $P_{4}\times  P_{n}$ is
$$N(P_{4}\times  P_{n})=\sum\limits_{k=1}^{n}(N_{4,k}-|\mathcal{C}_{4,k}'|)\cdot(n-k+1).$$
\end{theorem}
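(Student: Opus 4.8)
The plan is to reduce $N(P_4\times P_n)$ to the two quantities the preceding sections make computable: $N_{4,k}$ from Corollary~\ref{coro2} and $|\mathcal C_{4,k}'|$ from Lemma~\ref{lem3}. The starting point is the same column-span decomposition that underlies the proof of Theorem~\ref{main1}. Any connected set $C$ of $P_4\times P_n$ occupies a consecutive block of columns $j,j+1,\dots,j+k-1$ (connectivity forces the set of occupied columns to be an interval, since edges only run within a column or between adjacent ones), and it meets each of these columns; the induced subgraph on this block is isomorphic to $P_4\times P_k$, and there are exactly $n-k+1$ choices for the first column $j$. Writing $N_{4,k}^{*}$ for the number of connected sets of $P_4\times P_k$ meeting every column, I obtain
$$N(P_4\times P_n)=\sum_{k=1}^{n}(n-k+1)\,N_{4,k}^{*},$$
in exact parallel with (\ref{equ2.1}).

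Next I would express $N_{4,k}^{*}$ through the $K_4$-product. Since $P_4\times P_k$ is a subgraph of $K_4\times P_k$ on the same vertex set, every connected set of $P_4\times P_k$ is also a connected set of $K_4\times P_k$, and the property of meeting every column depends only on the vertex set. Hence the connected sets of $P_4\times P_k$ that meet every column are precisely those members of $\mathcal C_{4,k}$ that remain connected in $P_4\times P_k$, i.e.\ the complement of $\mathcal C_{4,k}'$ inside $\mathcal C_{4,k}$. This gives $N_{4,k}^{*}=N_{4,k}-|\mathcal C_{4,k}'|$, and substituting into the display above yields the claimed formula as soon as $N_{4,k}$ and $|\mathcal C_{4,k}'|$ are furnished by Corollary~\ref{coro2} and Lemma~\ref{lem3}, respectively.

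The substantive work is hidden in the equality $N'_{4,k}=|\mathcal C_{4,k}'|$, that is, in the tightness of the lower bound (\ref{equ3.12}) for $m=4$; this is where I expect the main obstacle to lie. Lemma~\ref{lem1} only guarantees an inequality because of the exceptional configuration depicted in Figure~\ref{fig1}, in which three or more components of a column interact across a merge. The observation specific to $m\le 4$ is that every nonempty $X\subseteq I_k$ induces at most two components in $P_4\times P_k$ (a subset of the path $P_4$ breaks into at most two runs), so $1\le c(X)\le 2$ throughout. This bound forces every triple $(S_n,T_{n-1},X)$ arising in Lemma~\ref{lem1}(3) into one of the resolvable subcases (i)--(iii), ruling out the problematic case of the Remark following Lemma~\ref{lem1}. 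Verifying this, and thereby the exactness $N'_{4,k}=|\mathcal C_{4,k}'|$, is precisely the content of Lemma~\ref{coro4}, whose proof simultaneously produces the recurrences for $r_n,t_n,k_n$ that feed Lemma~\ref{lem3}.

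Finally, with exactness in hand, the closed recurrences of Lemma~\ref{lem3} together with the initial values listed there determine $|\mathcal C_{4,k}'|$ for every $k$, while Corollary~\ref{coro2} delivers $N_{4,k}$; inserting both into $\sum_{k=1}^{n}(n-k+1)\bigl(N_{4,k}-|\mathcal C_{4,k}'|\bigr)$ gives $N(P_4\times P_n)$, in alignment with the equality case of Theorem~\ref{main}. As a consistency check I would compute the first few values of $N_{4,k}^{*}$ from the recurrences and match them against a direct enumeration on small grids, mirroring the verification carried out for $P_3\times P_n$ after Theorem~\ref{main3}.
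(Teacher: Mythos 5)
Your proposal is correct and follows essentially the same route as the paper: decompose by the consecutive block of occupied columns, identify the per-block count as $N_{4,k}-|\mathcal C_{4,k}'|$ via the subtraction $N(P_{4}\times P_{n})=N(K_{4}\times P_{n})-|\mathcal C(K_{4}\times P_{n})\setminus\mathcal C(P_{4}\times P_{n})|$, and invoke the tightness of (\ref{equ3.12}) for $m=4$, which rests exactly on the observation $1\le c(X)\le 2$ used in Lemma~\ref{coro4}. The paper states this theorem as an immediate consequence of Corollary~\ref{coro2} and Lemma~\ref{lem3}; your write-up just makes the same chain of reductions explicit.
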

\noindent\textbf{Remark.} See some detailed calculation results below.\par
(1) When $n=1$, $|\mathcal{C}_{4,1}'|=5$,  $N_{4,1}^{*}=N(P_{4})=10=N_{4,1}-|\mathcal{C}_{4,1}'|$.\par
(2) For $n=2$, $n=3$ and $n=4$,\par
~~~~(i) $a_{2}^{1}=4$,  $a_{2}^{2}=2$, $b_{2}^{1}=5$, $b_{2}^{2}=4$,
$b_{2}^{4}=10$, $b_{2}^{6}=11$,  $c_{2}^{1}=3$, $c_{2}^{3}=12$, $g_{2}=0$, $|\mathcal{C}_{4,2}'|=87, N_{4,2}^{*}=88$;\par
~~~~(ii) $a_{3}^{1}=57$,  $a_{3}^{2}=39$, $b_{3}^{1}=76$, $b_{3}^{2}=68$,
$b_{3}^{4}=117$, $b_{3}^{6}=127$,  $c_{3}^{1}=64$, $c_{3}^{3}=134$, $g_{3}=40$, $|\mathcal{C}_{4,3}'|=1209, N_{4,3}^{*}=920$;\par
~~~~(iii) $a_{4}^{1}=749$,  $a_{4}^{2}=602$, $b_{4}^{1}=1037$, $b_{4}^{2}=968$,
$b_{4}^{4}=1479$, $b_{4}^{6}=1559$,  $c_{4}^{1}=999$, $c_{4}^{3}=1674$, $g_{4}=824$, $|\mathcal{C}_{4,4}'|=16431$, $N_{4,4}^{*}=9362$.\par

Using Mathematica, we calculate $N_{4,3}^{*}$ and $N_{4,4}^{*}$, and the results are consistent with those obtained from the  formulas.\par

\section{Concluding remarks }

In this paper, we have determined an enumeration formula for $N(K_{m}\times  P_{n}) $ by establishing the transfer matrix which
is actually obtained based on the recurrence relation for counting the connected sets of $K_{m}\times  P_{n} $. Due to
the structural characteristics of $K_{m}$, the transfer matrix is $m\times m$.
Further, we have provided a scheme in Lemma~\ref{lem1} to classify all the connected sets in $\mathcal C(K_{m}\times P_{n} )\setminus \mathcal C(P_{m}\times P_{n})$, which naturally deduces a sharp lower bound of this set and enables us to derive
enumeration formulas for $N(P_3\times P_n)$ and $N(P_4\times P_n)$. 
 Although the general formula for counting connected sets in any $(m\times n)$-grid graph  remains an open problem, the methodology presented in Lemmas~\ref{lem2} and~\ref{lem3} makes the counting for small $m$ possible.

\section*{Acknowledgements}

This work is supported by National Natural Science Foundation of China (No. 12171402).

\end{document}